%

\documentclass[12pt]{amsart}
\usepackage{microtype}
\overfullrule = 10cm    
\usepackage{etex} 
\usepackage[active]{srcltx}

\usepackage{calc,amssymb,amsthm,amsmath,amscd, eucal,ulem, mathtools}
\usepackage{phaistos}
\usepackage{alltt}
\synctex=1
\RequirePackage[dvipsnames,usenames]{color}

\normalem
\input{mabliautoref.sty}
\input{xy}
\xyoption{all}
\input{kmacros3.sty}
\usepackage{tikz,calc}
\usepackage{tikz-cd}
\usepackage[marvosym]{tikzsymbols}
\usepackage{amsfonts, mathrsfs}
\usepackage[cal=boondox, calscaled=1.05]{mathalfa}
\usepackage{calligra}
\usepackage{stmaryrd} 
\usepackage{dcpic, pictexwd}
\usepackage[left=1in,top=1in,right=1in,bottom=1in]{geometry}
\usepackage{bm}
\usepackage{verbatim}
\usepackage{upgreek}

\numberwithin{equation}{theorem}

\newcommand{\vp}{\varphi}

\renewcommand{\:}{\colon}

\newcommand{\kay}{\mathcal{k}}



\DeclareMathOperator{\Cl}{Cl}

\DeclareMathOperator{\ssHom}{ \sH\!\!\;\!\!\text{\calligra{\Large om}\,}}

\newcommand{\exes}[1]{\bm{x}^{ {#1}_{\bullet} }}
\newcommand{\whys}[1]{\bm{y}^{ {#1}_{\bullet} }}

\newcommand{\exesi}[2]{\bm{x}_{#2}^{ {#1}_{\bullet, #2} }}
\newcommand{\whysi}[2]{\bm{y}_{#2}^{ {#1}_{\bullet, #2} }}
\newcommand{\zeesi}[2]{\bm{z}_{#2}^{ {#1}_{\bullet, #2} }}

\newcommand{\exestothe}[1]{\bm{x}^{#1}}
\newcommand{\whystothe}[1]{\bm{y}^{#1}}

\newcommand{\sumofas}[1]{\sum_{k} a_{#1, k}}
\newcommand{\sumofbs}[1]{\sum_{k} b_{#1, k}}

\newcommand{\diagCA}[1]{\sD^{(#1)}}

\DeclarePairedDelimiter\floor{\lfloor}{\rfloor}

\newcommand{\liftofphi}{\widehat \varphi_e}

\usepackage{setspace}
\usepackage{hyperref}
\usepackage{enumerate}
\usepackage{graphicx}
\usepackage[all,cmtip]{xy}

\usepackage{verbatim}

\theoremstyle{theorem}

\def\todo#1{\textcolor{Mahogany}%
{\footnotesize\newline{\color{Mahogany}\fbox{\parbox{\textwidth-15pt}{\textbf{todo:
} #1}}}\newline}}

\renewcommand{\O}{\mathcal O}
\renewcommand{\sO}{\mathcal{O}}
\renewcommand{\sF}{\mathcal{F}}
\renewcommand{\sE}{\mathcal{E}}
\renewcommand{\sC}{\mathcal{C}}
\renewcommand{\sD}{\mathcal{D}}
\renewcommand{\sI}{\mathcal{I}}
\renewcommand{\sH}{\mathcal{H}}
\renewcommand{\sG}{\mathcal{G}}
\usepackage{marvosym}

\begin{document}
\title[USTP for Diagonally $F$-Regular Algebras]{The Uniform Symbolic Topology Property for Diagonally $F$-regular Algebras} 
\author[J.~Carvajal-Rojas]{Javier Carvajal-Rojas}
\address{Department of Mathematics\\ University of Utah\\ Salt Lake City\\ UT 84112\\USA \newline\indent
Escuela de Matem\'atica\\ Universidad de Costa Rica\\ San Jos\'e 11501\\ Costa Rica}
\email{\href{mailto:carvajal@math.utah.edu}{carvajal@math.utah.edu}}
\author[D.~Smolkin]{Daniel Smolkin}
\address{Department of Mathematics\\ University of Utah\\ Salt Lake City\\ UT 84112\\USA} 
\email{\href{mailto:smolkin@math.utah.edu}{smolkin@math.utah.edu}}

\thanks{The first named author was supported in part by the NSF FRG Grant DMS \#1265261/1501115. The second named author was supported by the NSF grants DMS \#1246989/\#1252860/\#1265261.}

\subjclass[2010]{13A15, 13A35, 13C99, 14B05}


\begin{abstract}
Let $\kay$ be a field of positive characteristic. Building on the work of \cite{SmolkinSubadditivity}, we define a new class of $\kay$-algebras, called \emph{diagonally $F$-regular} algebras, for which the so-called \emph{Uniform Symbolic Topology Property} (USTP) holds effectively. We show that this class contains all essentially smooth $\kay$-algebras. We also show that this class contains certain singular algebras, such as the affine cone over $\mathbb{P}^r_{\kay} \times \mathbb{P}^s_{\kay}$, when $\kay$ is perfect. 
By reduction to positive characteristic, it follows that USTP holds \emph{effectively} for the affine cone over $\bP^r_{\bC} \times \bP^s_{\bC}$ and more generally for \emph{complex varieties of diagonal $F$-regular type}.
\end{abstract}
\maketitle
\section{Introduction}

We are concerned with the following question: when does a finite-dimensional Noetherian ring $R$ satisfy
\begin{equation} \label{eqn.SymbolicOrdinary}
\mathfrak{p}^{(hn)} \subset \mathfrak{p}^n \quad  \forall n \in \bN .
\end{equation}
for all prime ideals $\mathfrak p \subset R$ and for some $h$ independent of $\mathfrak p$? Here, the expression $\mathfrak p^{(m)}$ denotes the $m$-th \emph{symbolic power} of $\mathfrak p$. We invite the reader to glimpse at \cite{DaoDestefaniGrifoHunekeSymbolicSurvey} for an excellent survey on this beautiful but tough problem.

This story starts, perhaps, with the work of I.~Swanson. Swanson established that if $R$ is a Noetherian ring and $\mathfrak{p} \subset R$ is a prime ideal such that the $\mathfrak{p}$-adic and symbolic topologies are equivalent, then they are in fact \emph{linearly equivalent}, meaning there is a constant $h \in \bN$ depending on $\mathfrak{p}$ such that $\mathfrak p^{(hn)} \subset \mathfrak p^n$ for all $n$ \cite{SwansonSymbolicPowers}. 
In particular, Swanson's result holds for every prime ideal $\mathfrak p$ when $R$ is a normal domain essentially of finite type over a field.

Later, Ein--Lazarsfeld--Smith demonstrated in their seminal work \cite{EinLazSmithSymbolic} that if $R$ is a regular $\bC$-algebra essentially of finite type, then $h$ can be taken independently of $\mathfrak{p}$. In fact $h=\dim R$ suffices. Rings for which this number $h$ can be taken independently of $\mathfrak p$ (\ie for which there exists a uniform bound on $h$ for all $\mathfrak p$) are said to have the \emph{Uniform Symbolic Topology Property}, or USTP for short. Ein--Lazarsfeld--Smith's result is now known to hold for any finite-dimensional regular ring: their result was extended to regular rings of equal characteristic by M.~Hochster and C.~Huneke \cite{HochsterHunekeComparisonOfSymbolic} to regular rings of mixed characteristic by L.~Ma and K.~Schwede \cite{MaSchwedePerfectoid}.

Since then, it has been of great interest to know which non-regular rings have USTP. For instance, Huneke--Katz--Validashti showed that, under suitable hypotheses, rings with isolated singularities have USTP, although without an \emph{effective} bound on $h$ \cite{HunekeKatzValidashtiUniformEquivIsolatedSing}. R.~Walker showed that 2-dimensional rational singularities have USTP and obtained an effective bound for $h$ \cite{WalkerRationalSingSymbolicPowers}. 

In this paper, we continue the above efforts in the \emph{strongly $F$-regular} setting. Strong $F$-regularity is a weakening of regularity defined for rings of positive characteristic. Strong $F$-regularity is well-studied by positive characteristic commutative algebraists; see \cite{HunekeTightClosureBook,SchwedeTuckerTestIdealSurvey, SmithZhang}. Given a a field $\kay$ of positive characteristic, we introduce a class of strongly $F$-regular $\kay$ algebras essentially of finite type, called \emph{diagonally $F$-regular} $\kay$-algebras, that are engineered to have USTP; in particular, \autoref{eqn.SymbolicOrdinary} holds for these rings with $h$ equal to dimension. We prove that this class includes all essentially smooth\footnote{If $\kay$ is perfect, then essentially smooth $\kay$-algebras are the same as regular $\kay$-algebras essentially of finite type.} $\kay$-algebras, as well as Segre products of polynomial rings over $\kay$,\footnote{Also known as ``Cartesian products,'' as in \cite[Ch. II, Exc. 5.11]{Hartshorne}.} \ie the affine cone over $\bP^r_{\kay} \times \bP^s_{\kay}$ whenever $\kay$ is a perfect field of positive characteristic and $r,s \geq 1$. We also show that the class of diagonally $F$-regular $\kay$-algebras contains some non-isolated singularities. 

To motivate our approach, we summarize the method introduced in \cite{EinLazSmithSymbolic}, following the presentation of K.~Schwede and K.~Tucker in their survey, \cite[\S 6.3]{SchwedeTuckerTestIdealSurvey}; see also \cite{SmithZhang} by K.~Smith and W.~Zhang.\footnote{Ein--Lazarsfeld--Smith's original argument uses \emph{multiplier ideals}, which are only known to exist in characteristic 0. Their argument was adapted to positive characteristic rings by N.~Hara  \cite{HaraACharacteristicPAnalogOfMultiplierIdealsAndApplications} and to mixed-characteristic rings by L.~Ma and K.~Schwede \cite{MaSchwedePerfectoid}. Hara and Ma--Schwede achieved this by using positive characteristic and mixed characteristic analogs of multiplier ideals, respectively.} 
We do this with the aim of pointing out exactly where this argument breaks down for non-regular rings. In positive characteristic, the crux of Ein--Lazarsfeld--Smith's argument is the following chain of containments:
\begin{equation} \label{eqn.Intro}
  \mathfrak p^{(hn)} \overset{(1)}{\subset}
  \uptau \Big(\mathfrak p^{(hn)}\Big) = 
  \uptau\left( \Big(\mathfrak p^{(hn)}\Big)^{n/n} \right) \overset{(2)}{\subset}
  \uptau\left( \Big(\mathfrak p^{(hn)}\Big)^{1/n} \right)^n \overset{(3)}{\subset}
  \mathfrak p^n
\end{equation}
Here, $\uptau(\mathfrak a^t)$ denotes the test ideal of the (formal) power $\mathfrak a^t$; see \autoref{secn.prelim} for details. Containment (1) holds in any strongly $F$-regular ring. Containment (2) holds by the subadditivity theorem for test ideals---this theorem requires the ambient ring $R$ to be regular. Containment (3) holds quite generally (for $h = \dim R$), as we shall discuss in the proof of \autoref{thm.MainTheorem}.

So, in order to apply this technique to the non-regular case, we must deal with containment (2). Our approach here is simple: we will find an ideal $\mathfrak{t}$, depending on $\mathfrak p$, $h$,  and $n$, such that the second containment
\[
 \mathfrak{t} \overset{(2')}{\subset} \uptau\left( \Big(\mathfrak p^{(hn)}\Big)^{1/n} \right)^n
\]
is guaranteed to hold. Then the problem of deciding whether a particular $F$-regular ring satisfies USTP is reduced to deciding whether the first containment, 
\[
  \mathfrak p^{(nh)} \overset{(1')}{\subset} \mathfrak{t},
\]
holds for our choice of $\mathfrak{t}$. Following \cite{SmolkinSubadditivity}, we will construct $\mathfrak{t}$ using the so-called \emph{diagonal Cartier algebras}. Namely, we set
\[
  \mathfrak{t} = \uptau\Big(\sD^{(n)}; \mathfrak p^{(hn)} \Big),
\]
where $\sD^{(n)}$ is the \emph{$n$-th diagonal Cartier algebra}; see \autoref{def.diagCartAlg}. Then \autoref{prop.basicProperties}(c) demonstrates that containment ($2'$) holds for any reduced $\kay$-algebra essentially of finite type, while ($1'$) holds whenever $\sD^{(n)}$ is $F$-regular. When this is the case for all $n$, we say our ring is \emph{diagonally $F$-regular} as a $\kay$-algebra. This sketches the proof of our main theorem:

\begin{theoremA*}[\autoref{thm.MainTheorem}]
If $R$ is a diagonally $F$-regular $\kay$-algebra essentially of finite type, then $R$ has USTP with $h=\dim R$.
\end{theoremA*}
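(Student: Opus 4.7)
The plan is to follow the Ein--Lazarsfeld--Smith strategy sketched in the introduction, replacing the subadditivity step (which would demand regularity) by the diagonal Cartier algebra substitute. Fix a prime ideal $\mathfrak{p}\subset R$ and an integer $n\geq 1$, and set $h=\dim R$. We aim to show $\mathfrak{p}^{(hn)}\subseteq \mathfrak{p}^{n}$ by establishing the chain
\[
\mathfrak{p}^{(hn)} \;\overset{(1')}{\subseteq}\; \uptau\!\Big(\sD^{(n)};\mathfrak{p}^{(hn)}\Big) \;\overset{(2')}{\subseteq}\; \uptau\!\Big((\mathfrak{p}^{(hn)})^{1/n}\Big)^{n} \;\overset{(3)}{\subseteq}\; \mathfrak{p}^{n}.
\]

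Containment $(1')$ is where the diagonal $F$-regularity hypothesis is used: by definition, each $\sD^{(n)}$ is an $F$-regular Cartier algebra, and a standard property of test ideals of $F$-regular Cartier algebras gives $I\subseteq \uptau(\sD^{(n)};I)$ for any ideal $I$ (apply with $I=\mathfrak{p}^{(hn)}$). Containment $(2')$ is the content of \autoref{prop.basicProperties}(c) and functions as the substitute for the classical subadditivity theorem; critically, it is valid for every reduced $\kay$-algebra essentially of finite type, without any regularity hypothesis. Containment $(3)$ is where the choice $h=\dim R$ enters. By primality of $\mathfrak{p}$ it suffices to show $\uptau((\mathfrak{p}^{(hn)})^{1/n})\subseteq \mathfrak{p}$, which can be verified after localizing at $\mathfrak{p}$. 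Since symbolic and ordinary powers of $\mathfrak{p}$ agree in $R_{\mathfrak{p}}$, and test ideals commute with localization, one is reduced to the inclusion $\uptau((\mathfrak{p}R_{\mathfrak{p}})^{h})\subseteq \mathfrak{p}R_{\mathfrak{p}}$ in the local strongly $F$-regular ring $R_{\mathfrak{p}}$. This follows from the Brian\c{c}on--Skoda-type theorem for test ideals, exploiting that $\dim R_{\mathfrak{p}}\leq h$.

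The most substantive step is $(2')$: it is the pay-off of the diagonal Cartier algebra machinery of \cite{SmolkinSubadditivity} that this inclusion, unlike its classical counterpart, no longer requires $R$ to be regular. Once $(2')$ is in hand and $(1')$ is simply read off from diagonal $F$-regularity, the theorem follows by stringing the three containments together, so I do not anticipate the proof of the theorem itself to be the main obstacle. The real substance of the paper, rather, lies in the proof of \autoref{prop.basicProperties}(c) and in verifying that natural examples (essentially smooth algebras, Segre products of polynomial rings, etc.) satisfy the diagonal $F$-regularity condition.
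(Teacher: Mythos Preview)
Your proposal is correct and follows essentially the same route as the paper's proof: the chain $(1')$--$(2')$--$(3)$ you write is exactly the chain the paper assembles from \autoref{prop.basicProperties}(b), (a), (c), together with the Brian\c{c}on--Skoda bound. The one detail you gloss over in step $(3)$ is that the Brian\c{c}on--Skoda theorem for test ideals gives $\uptau(\mathfrak{q}^{h})\subset\mathfrak{q}$ when $\mathfrak{q}$ is generated by at most $h$ elements, not merely when the ambient ring has dimension $h$; the paper closes this gap by first observing that the case where $\mathfrak{p}$ is maximal is trivial, so one may assume $\kappa(\mathfrak{p})$ is transcendental over $\kay$ and hence infinite, which then permits replacing $\mathfrak{p}R_{\mathfrak{p}}$ by a reduction $\mathfrak{q}$ generated by at most $\dim R_{\mathfrak{p}}$ elements before invoking Brian\c{c}on--Skoda.
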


As we shall see, every essentially smooth $\kay$-algebra is diagonally $F$-regular, but not conversely. Indeed, we have the following:

\begin{theoremB*}[\autoref{thm.SegreDFR}]
Let $\kay$ be a perfect field of positive characteristic, and let $r, s \geq 1$ be integers. Then the affine cone over $\bP^r_\kay \times \bP^s_\kay$ is diagonally $F$-regular.
\end{theoremB*}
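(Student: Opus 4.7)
The plan is to realize the affine cone $R$ over $\mathbb{P}^r_\kay \times \mathbb{P}^s_\kay$ as the $\kay$-subalgebra of $A := \kay[x_0,\ldots,x_r,y_0,\ldots,y_s]$ generated by the bilinear monomials $x_i y_j$, i.e., $R = A^{\mathbb{G}_m}$ for the diagonal torus action $t \cdot x_i = t x_i$, $t \cdot y_j = t^{-1} y_j$. Since $\mathbb{G}_m$ is linearly reductive and $\kay$ is perfect, the Reynolds operator $\rho\colon A \to R$ provides an $R$-linear splitting of the inclusion $R \hookrightarrow A$; in particular, $R$ is a direct summand of $A$ as an $R$-module. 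Because $A$ is a polynomial ring over a perfect field, it is essentially smooth, hence by the statement preceding Theorem B we know that $A$ is diagonally $F$-regular, so $\sD^{(n)}_A$ is $F$-regular for every $n \geq 1$.

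The main step is to establish that diagonal $F$-regularity descends along $R \hookrightarrow A$. Since the $n$-th diagonal Cartier algebra $\sD^{(n)}$ is built out of the $n$-fold tensor power over $\kay$, one first tensors $\rho$ with itself to obtain an $R^{\otimes_\kay n}$-linear splitting $\rho^{\otimes n}\colon A^{\otimes_\kay n} \to R^{\otimes_\kay n}$ of the inclusion $R^{\otimes_\kay n} \hookrightarrow A^{\otimes_\kay n}$. The plan is then to use $\rho^{\otimes n}$ to transfer Cartier operators witnessing the $F$-regularity of $\sD^{(n)}_A$ to Cartier operators witnessing the $F$-regularity of $\sD^{(n)}_R$, exactly as in the standard argument that direct summands of $F$-regular rings are $F$-regular. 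The crucial observation making this transfer work is that the $\mathbb{G}_m$-action acts factor-wise on $A^{\otimes_\kay n}$, so the splitting $\rho^{\otimes n}$ is itself equivariant and respects the diagonal embedding $R \to R^{\otimes_\kay n}$.

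The hard part will be checking that this descent is fully compatible with the \emph{diagonal} Cartier-algebra structure used in the definition of $\sD^{(n)}$ from \cite{SmolkinSubadditivity}, not just with the ordinary Cartier algebra of $R^{\otimes_\kay n}$. Concretely, one must verify that $\rho^{\otimes n}$ sends generators of $\sD^{(n)}_A$ to generators of $\sD^{(n)}_R$ (or at least to elements large enough to detect the test ideal), which should follow from the fact that $\rho^{\otimes n}$ commutes with the relevant diagonal comultiplications because everything in sight is $\kay$-linear and $\mathbb{G}_m$-equivariant. Once this compatibility is in place, splitting an arbitrary element of $R$ off of $1 \in A$ via a witness from $\sD^{(n)}_A$ and applying $\rho^{\otimes n}$ yields a witness from $\sD^{(n)}_R$, giving $F$-regularity of $\sD^{(n)}_R$ for every $n$ and thus the diagonal $F$-regularity of $R$.

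As a fallback, should the descent argument prove delicate to make precise, one can appeal to the toric structure of $R$: the Segre product is a normal toric $\kay$-algebra, and its Frobenius splittings admit an explicit combinatorial description. In that setting, one could compute $\sD^{(n)}_R$ directly in terms of lattice points and produce explicit Cartier operators that witness $F$-regularity, bypassing the need to invoke descent from the ambient polynomial ring.
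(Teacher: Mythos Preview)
Your main descent argument has a genuine gap: diagonal $F$-regularity does \emph{not} pass to direct summands, and the obstruction is precisely the compatibility you flag as ``the hard part.'' The paper itself proves (\autoref{prop.diagFRegIsTorsFree} and the example following it) that Veronese subrings of polynomial rings are \emph{never} diagonally $F$-regular, yet they are $\mathbb{G}_m$-invariant direct summands of polynomial rings via a Reynolds operator of exactly the type you invoke. So whatever argument you write down must distinguish the Segre product from the Veronese, and the Reynolds-operator framework cannot do that on its own.

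Concretely, the step that fails is the commutation $\Delta_n^R \circ \rho^{\otimes n} = \rho \circ \Delta_n^A$: this says $\rho(a_1)\cdots\rho(a_n) = \rho(a_1\cdots a_n)$, i.e.\ that $\rho$ is multiplicative, which it is not (e.g.\ $\rho(x_0)\rho(y_0)=0$ while $\rho(x_0 y_0)=x_0 y_0$). Without this, your candidate $\widehat\psi = \rho^{\otimes n}\circ\widehat\varphi\circ F^e_*\iota^{\otimes n}$ is not a lift of $\psi = \rho\circ\varphi\circ F^e_*\iota$ along $\Delta_n$, so you do not land in $\sD^{(n)}(R)$.

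The paper's proof is much closer to your fallback. It does work inside the ambient polynomial ring $S$, but instead of descending arbitrary maps it constructs by hand a single map $\varphi_e = \Phi^e\cdot x_0^{q-2}x_1^{q-1}\cdots y_0^{q-2}y_1^{q-1}\cdots$ that restricts to $R$, sends $F^e_* x_0 y_0$ to $1$, and then explicitly builds a lifting $\widehat\varphi_e \in \sC_{e,S^{\otimes n}}$ that preserves $R^{\otimes n}$. The combinatorics of this last step---deciding where to send each monomial basis element of $F^e_* S^{\otimes n}$ so that the $x$-degree/$y$-degree balance in each tensor factor is maintained---is the real content, and it exploits the specific bilinear shape of the Segre relations in a way a general torus quotient does not share. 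Combined with \autoref{prop.KeyReduction} and the regularity of $R_{x_0 y_0}$, this gives $F$-regularity of $\sD^{(n)}(R)$ for all $n$.
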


Of course, the affine cone over $\bP^r_\kay \times \bP^s_\kay$ is an isolated singularity, and so USTP is known to hold for this ring by \cite{HunekeKatzValidashtiUniformEquivIsolatedSing}. Nonetheless, our result has the virtue of being effective in the sense that we determine the number $h$ explicitly, and show $h$ is as small as we might expect it to be. We also observe that the class of diagonally $F$-regular $F$-finite $\kay$-algebras is closed under tensor products over $\kay$:
\begin{theoremC*}
Let $R$ and $S$ be $\kay$-algebras essentially of finite type, where $\kay$ is a field of characteristic $p$. If $R$ and $S$ are diagonally $F$-regular, then so is $R \otimes_{\kay} S$.
\end{theoremC*}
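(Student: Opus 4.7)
The plan is to show, for each $n$, that $F$-regularity of $\sD^{(n)}_R$ and $\sD^{(n)}_S$ together imply $F$-regularity of $\sD^{(n)}_{R \otimes_\kay S}$. The essential input is a compatibility between the diagonal Cartier algebra construction and the tensor product of $\kay$-algebras: the canonical isomorphism
\[
(R \otimes_\kay S)^{\otimes_\kay n} \;\cong\; R^{\otimes_\kay n} \otimes_\kay S^{\otimes_\kay n}
\]
intertwines the $n$-fold multiplication maps $R^{\otimes_\kay n} \onto R$, $S^{\otimes_\kay n} \onto S$, and $(R \otimes_\kay S)^{\otimes_\kay n} \onto R \otimes_\kay S$. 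First I would use this to construct, for each $e$, an ``external product'' $\kay$-bilinear map
\[
\sD^{(n)}_{R,e} \;\otimes_\kay\; \sD^{(n)}_{S,e} \;\longrightarrow\; \sD^{(n)}_{R \otimes_\kay S, e}, \qquad \varphi \otimes \psi \;\longmapsto\; \varphi \boxtimes \psi,
\]
exploiting the fact that Frobenius push-forward commutes with $\otimes_\kay$ (since $\kay$ is a field): $F^e_* (R \otimes_\kay S) \cong F^e_* R \otimes_\kay F^e_* S$. I would then verify that this pairing is multiplicative in the Cartier-algebra grading, so that the image generates a Cartier subalgebra of $\sD^{(n)}_{R \otimes_\kay S}$.

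With the external product pairing in place, the proof of $F$-regularity of $\sD^{(n)}_{R \otimes_\kay S}$ reduces to a standard argument. Fix a nonzero $c \in R \otimes_\kay S$; we seek $e \in \bN$ and an operator $\Phi$ in $\sD^{(n)}_{R \otimes_\kay S, e}$ splitting the inclusion $c \cdot (R \otimes_\kay S) \subseteq R \otimes_\kay S$ at level $e$. Write $c = \sum_{i=1}^m r_i \otimes s_i$ with the $s_i$ $\kay$-linearly independent in $S$. Using $F$-regularity of $\sD^{(n)}_S$, one selects $e$ and $\psi \in \sD^{(n)}_{S,e}$ (after premultiplication by a suitable element of $S$) so that applying $(\mathrm{id} \boxtimes \psi)$ to $c^{1/p^e}$ outputs a nonzero element of $R$, say $r_1$ (for a fixed index $1$). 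Then $F$-regularity of $\sD^{(n)}_R$ furnishes $e'$ and $\varphi \in \sD^{(n)}_{R,e'}$ sending $r_1^{1/p^{e'}}$ to $1$, and composing (with an additional Frobenius iterate to reconcile the two levels) gives an element $\Phi \in \sD^{(n)}_{R \otimes_\kay S, e+e'}$ with $\Phi(c^{1/p^{e+e'}}) = 1$. Since $n$ was arbitrary, this exhibits $R \otimes_\kay S$ as diagonally $F$-regular.

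The main obstacle will be the first step: verifying that the external product $\varphi \boxtimes \psi$ really lands in $\sD^{(n)}_{R \otimes_\kay S}$ and not merely in the full Cartier algebra of $R \otimes_\kay S$. This requires unpacking the definition of the diagonal Cartier algebra from \cite{SmolkinSubadditivity} and checking that the operators obtained by tensoring the building blocks of $\sD^{(n)}_R$ and $\sD^{(n)}_S$ arise from the diagonal of $(R \otimes_\kay S)^{\otimes_\kay n}$ under the identification above. A secondary technical point is the linear-algebra step in which $\kay$-linear independence of the $s_i$ is converted into the existence of a single Cartier operator picking out a specific summand; this is essentially the reason tensor products over a field preserve $F$-regularity of Cartier algebras, and one should take some care if $\kay$ is not assumed perfect.
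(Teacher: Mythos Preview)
Your identification of the key technical step---that the external product $\varphi \boxtimes \psi$ of maps in $\sD^{(n)}_e(R)$ and $\sD^{(n)}_e(S)$ lands in $\sD^{(n)}_e(R\otimes_\kay S)$, because a lift $\widehat\varphi \otimes \widehat\psi$ is compatible with the diagonal of $(R\otimes S)^{\otimes n}\cong R^{\otimes n}\otimes S^{\otimes n}$---is exactly the lemma the paper isolates and proves by a diagram chase. So the heart of your proposal matches the paper.

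Where you diverge is in how $F$-regularity of $\sD^{(n)}(R\otimes S)$ is then deduced. The paper bypasses your element-by-element splitting argument entirely and uses global $F$-signature (in the sense of De~Stefani--Polstra--Yao): tensoring surjections $F^e_*R\twoheadrightarrow R^{\oplus a_e}$ and $F^e_*S\twoheadrightarrow S^{\oplus b_e}$, each of whose components lies in the relevant diagonal Cartier algebra, yields a surjection $F^e_*(R\otimes S)\twoheadrightarrow (R\otimes S)^{\oplus a_e b_e}$ with components in $\sD^{(n)}(R\otimes S)$. Hence $s\bigl(R\otimes S,\sD^{(n)}\bigr)\ge s\bigl(R,\sD^{(n)}\bigr)\cdot s\bigl(S,\sD^{(n)}\bigr)>0$, which is equivalent to $F$-regularity.

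Your direct argument, by contrast, has a gap as written. The operator ``$\mathrm{id}\boxtimes\psi$'' with $\psi\in\sD^{(n)}_{S,e}$ is not a degree-$e$ Cartier operator on $R\otimes S$: its target is $F^e_*R\otimes_\kay S$, not $R\otimes_\kay S$, and there is no $R$-linear identification of $F^e_*R$ with $R$. To make a genuine element of $\sD^{(n)}_{R\otimes S,e}$ you must pair $\psi$ with some $\varphi\in\sD^{(n)}_{R,e}$ of the \emph{same} degree, and then $(\varphi\boxtimes\psi)(F^e_*c)=\sum_i \varphi(F^e_*r_i)\otimes\psi(F^e_*s_i)$ has cross terms. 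Getting rid of them requires a $\psi$ with $\psi(F^e_*s_1)=1$ and $\psi(F^e_*s_i)=0$ for $i\ge 2$; mere $F$-regularity of $\sD^{(n)}_S$ (splitting one element at a time) does not obviously supply this separation, and ``premultiplication by a suitable element of $S$'' does not manufacture it either. One can prove such a separation lemma, but the natural argument is that the $\sD^{(n)}$-free rank $a_e(S,\sD^{(n)})$ eventually exceeds $m$---which is precisely the $F$-signature statement the paper invokes. So the paper's route is not just different but genuinely shorter: it absorbs the delicate linear-algebra step into a single inequality of $F$-signatures.
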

This implies, remarkably, that the class of diagonally $F$-regular singularities includes some non-isolated singularities. To our knowledge, this gives a new class of examples where USTP is known to hold. We note that R.~Walker obtains orthogonal results to \autoref{thm.MainTheorem} and \autoref{thm.SegreDFR} using complementary techniques; see  \cite{WalkerUniformSymbolicNormalToric,WalkerUniformSymbolicMultinomial} for precise statements.

Finally, let $K$ be a field of characteristic $0$ and $R$ a $K$-algebra. Suppose that $A\subset K$ is a finitely generated $\bZ$-algebra and $R_A \subset R$ an $A$-module such that $A \to R_A$ is a reduction of $K \to R$ \cite[\S 2]{HochsterHunekeTightClosureInEqualCharactersticZero}. We define $R$ to have \emph{diagonally $F$-regular type} if $R_A \otimes A/\mu$ is diagonally $F$-regular for all maximal ideals $\mu$ in a dense open set of $\Spec A$, for all choices of $A$. By standard reduction-mod-$p$ techniques, we get

\begin{theoremD*}[\autoref{lemma.diagFregType}]
Let $K$ be a field of characteristic 0 and let $R$ be a $K$-algebra essentially of finite type and of diagonally $F$-regular type. Let $d = \dim R$. Then we have $\mathfrak p^{(nd)} \subset \mathfrak p^n$ for all $n$ and all prime ideals $\mathfrak p \subset R$. 
\end{theoremD*}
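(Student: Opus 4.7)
The plan is the classical reduction-mod-$p$ paradigm: spread the data $(K, R, \mathfrak p)$ and any given witness $f \in \mathfrak p^{(nd)}$ out to a family over $\Spec A$ for some finitely generated $\bZ$-subalgebra $A \subset K$; apply Theorem A on the closed fibers, which by hypothesis are diagonally $F$-regular on a dense open subset $U' \subset \Spec A$; and then lift the resulting containment back across $A \to K$. The bulk of the work is the spreading-out bookkeeping, and the main obstacle will be ensuring that the symbolic power $\mathfrak p^{(nd)}$ specializes correctly on a dense open; once that is arranged the proof is mechanical, and no characteristic-zero substitute for test ideals or for the diagonal Cartier algebras underlying Theorem A is required.

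Fix a prime $\mathfrak p \subset R$ and an integer $n \geq 1$, and let $f \in \mathfrak p^{(nd)}$; the goal is $f \in \mathfrak p^n$. Choose a finitely generated $\bZ$-subalgebra $A \subset K$ carrying all data defining $R$, $\mathfrak p$, $f$, a finite generating set of $\mathfrak p^n$, and a chosen symbolic representation of $\mathfrak p^{(nd)}$, and write $R_A$, $\mathfrak p_A$, $f_A$ for the resulting models. Enlarging $A$ freely, which is permitted because diagonally $F$-regular type is defined by a quantifier over \emph{all} choices of model, I may assume: $R_A$ is $A$-flat and essentially of finite type; both $R_A / \mathfrak p_A^n$ and $R_A / \mathfrak p_A^{(nd)}$ are $A$-flat, so that these ideals base-change correctly to each fiber $R_\mu := R_A \otimes_A A/\mu$; the reduction $\mathfrak p_\mu$ of $\mathfrak p_A$ remains prime in $R_\mu$ for every $\mu$ in a dense open $U \subset \Spec A$; and $\dim R_\mu = d$ on $U$. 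Intersecting $U$ with $U'$, I may moreover assume every closed fiber over the resulting dense open is diagonally $F$-regular.

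For each maximal $\mu$ in this intersection, Theorem A applied to $R_\mu$ yields $\mathfrak p_\mu^{(nd)} \subset \mathfrak p_\mu^n$, so the image of $f_A$ in $R_\mu$ lies in $\mathfrak p_\mu^n$. Because maximal ideals are dense in $\Spec A$ (as $A$ is Jacobson) and $R_A / \mathfrak p_A^n$ is $A$-flat and essentially of finite type over the relevant open, this forces $f_A \in \mathfrak p_A^n$, and extending scalars along $A \to K$ gives $f \in \mathfrak p^n$. Repeating the argument for each pair $(\mathfrak p, n)$ settles the theorem. The genuinely delicate point in this outline is the claim that $\mathfrak p_A^{(nd)}$ has the correct fibers over $U$: symbolic powers are defined by localization at $\mathfrak p$, so their descent requires a generic-flatness argument for the associated primes of $R_A / \mathfrak p_A^{nd}$ over $A$. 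This is the standard package underlying Hochster--Huneke's treatment in \cite{HochsterHunekeTightClosureInEqualCharactersticZero}, and no ingredient beyond Theorem A is needed once it is assembled.
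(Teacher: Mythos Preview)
Your proposal is correct and follows essentially the same reduction-mod-$p$ strategy as the paper: spread out $(R,\mathfrak p)$ over a finitely generated $\bZ$-algebra $A$, apply Theorem~A on the diagonally $F$-regular closed fibers, and identify the descent of the symbolic power $\mathfrak p^{(nd)}$ as the one delicate point, handled by the primary-decomposition machinery in \cite[\S2.1]{HochsterHunekeTightClosureInEqualCharactersticZero}. The only cosmetic difference is that you argue element by element (fixing $f\in\mathfrak p^{(nd)}$) while the paper works directly at the level of ideals, but the content is the same.
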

Thus we see that the affine cone over $\bP^r_\kay \times \bP^s_\kay$ has USTP even if char $\kay = 0$. 

\begin{convention}
Throughout this paper all rings are defined over a field $\kay$ of positive characteristic $p$. Given a ring $R$, we then denote the $e$-th iterate of the Frobenius endomorphism by $F^e\:R \to R$, and use the usual shorthand notation $q \coloneqq p^e$. We assume all rings are essentially of finite type over $\kay$, thus Noetherian, $F$-finite, and so excellent. All tensor products are defined over $\kay$ unless explicitly stated otherwise. We also follow the convention $\mathbb{N}=\{0,1,2,...\}$.
\end{convention}

\subsection*{Acknowledgements} The authors started working on this project during Craig Huneke's 65th Birthday Conference in the University of Michigan in the summer of 2016 after attending Daniel Katz's talk on the subject. We are especially grateful to Craig Huneke who originally inspired us to study this problem for strongly $F$-regular singularities. We are greatly thankful to Elo\'isa Grifo, Linquan Ma, and Ilya Smirnov for many valuable conversations. We are especially thankful to Axel St\"abler, Kevin Tucker, and Robert Walker for reading through a preliminary draft and providing valuable feedback. We are particularly thankful to Axel~St\"abler for pointing out that we do not need $\kay$ to be perfect in \autoref{prop.basicProperties}. Finally, we are deeply thankful to our advisor Karl Schwede for his generous support and constant encouragement throughout this project. 
\section{Preliminaries}
\label{secn.prelim}

The central objects in this paper are Cartier algebras, their test ideals, and the notion of (strong) $F$-regularity of a Cartier module. We briefly summarize these here, following the formalism of M.~Blickle and A.~St\"abler \cite{BlickleTestIdealsViaAlgebras}, \cite{BlickleStablerFunctorialTestModules}. It is worth mentioning that for the most part we will only be using Cartier algebras and test ideals in the generality introduced by K.~Schwede in \cite{SchwedeTestIdealsInNonQGor}.

\begin{definition}[Cartier Algebras]
Let $R$ be a ring. A \emph{Cartier algebra} $\sC$ over $R$ (or \emph{Cartier $R$-algebra}) is an $\mathbb{N}$-graded $\bigoplus_{e\in \bN} \sC_e$  unitary  ring\footnote{Not necessarily commutative.} such that $\sC_0 = R$, and equipped with a graded finitely generated $R$-bimodule structure so that $a \cdot \kappa = \kappa \cdot a^q$, with $\kappa$ homogeneous of degree $e$.\footnote{Recall that if $A$ and $B$ are (commutative) rings, then an $A$-$B$-bimodule is nothing but a left $A \otimes_{\mathbb{Z}} B$-module. More generally, if $A$ and $B$ are algebras over a third ring $R$, then a left $A \otimes_R B$-module is nothing but an $A$-$B$-module, say $M$, with an extra compatibility condition $rm = mr$ for all $r\in R$ and $m\in M$. In this way, all we are saying is that $\sC_e$ is an $R \otimes_{R} F^e_* R $-module.} A \emph{morphism of Cartier algebras} is just a graded homomorphism of unitary rings preserving the $R$-bimodule structures. Note that, strictly speaking, $\sC$ is not an $R$-algebra, as $R$ is not in the center of $\sC$. 
\end{definition}

A central example for us is the \emph{full Cartier algebra} over a ring $R$. This is defined in degree $e>1$ as
\[
\sC_{e,R} \coloneqq \Hom_R (F^e_* R, R).
\]
More generally, given a finite $R$-module $M$ we may define a Cartier algebra $\sC_{M}$ over $R$ as $R$ in degree zero and as
\[
\sC_{e, M} \coloneqq \Hom_R(F^e_* M, M)
\]
in higher degrees. The ring multiplication of $\sC_{M}$ is defined by the rule 
\[\varphi_e \cdot \varphi_d \coloneqq \varphi_e \circ F^e_* \varphi_d \quad \text{for all }\varphi_e \in \sC_{e,M}, \varphi_d \in \sC_{d,M}. \]
Furthermore, the left $R$-module structure of $\sC_{M}$ is the usual one given by post-multiplication, whereas the right $R$-module structure is given by pre-multiplication by elements of $F_*^e R$. More precisely, if $\varphi \in \sC_{e,M}$ and $r \in R$, then
\[
(\varphi \cdot r)(-) =\varphi(F_*^e r \cdot -)
\]

It is worth mentioning we are primarily concerned with Cartier subalgebras of $\sC_R$ in this work. 

\begin{definition}[Cartier Modules]
Given a ring $R$ and a Cartier $R$-algebra $\sC$, we define a \emph{Cartier $\sC$-module} to be a finite $R$-module $M$ equipped with a homomorphism $\sC \to \sC_{M}$ of Cartier $R$-algebras. This is the same as saying $M$ is a left $\sC$-module with coherent underlying $R$-module structure \cite[Lemma 5.2]{BlickleStablerFunctorialTestModules}. A \emph{morphism of Cartier $\sC$-modules} is defined to be a morphism of left $\sC$-modules. 
\end{definition}

Let $R$ be a ring and $\sC$ a Cartier $R$-algebra. Under the assumption $R$ is essentially of finite type over $\kay$, Blickle and St\"abler constructed a covariant functor
\[
\uptau = \uptau(-,\sC) \: \text{left-}\sC\text{-mod} \to R\text{-mod},
\]
from the category of Cartier $\sC$-modules to the category of $R$-modules. This functor is an additive subfunctor of the forgetful functor between these two categories. Thus one has a natural inclusion $\uptau(M, \sC) \subset  M$, where the former module is called the \emph{test submodule of $M$ with respect to $\sC$}, or the \emph{test ideal with respect to $\sC$} in case $M=R$. 


\begin{definition}[$F$-regularity]
A Cartier $\sC$-module $M$ over $R$ is said to be (strongly) $F$-regular (with respect to $\sC$) if the inclusion $\uptau(M, \sC) \subset M$ is an equality.
\end{definition}

Blickle and St\"abler also proved that $\uptau$ commutes with localizations, see \cite[Proposition 1.19.(b)]{BlickleStablerFunctorialTestModules}. In particular, $F$-regularity is a local notion. It follows that $M$ is an $F$-regular Cartier $\sC$-module if and only if $M_{\mathfrak{p}}$ is an $F$-regular Cartier $\sC$-module for all $\mathfrak{p} \in \Spec R$.

On the other hand, suppose $\sC$ is a non-degenerate\footnote{A map $\varphi\colon F^e_*R \to R$ is called nondegenerate if $\varphi(F^e_* R) R_\eta \neq 0$ for all minimal primes $\eta \in \Spec R$. A Cartier algebra $\sC$ is called nondegenerate if $\sC_e$ contains a nondegenerate map for some $e > 0$. See \cite{SchwedeTestIdealsInNonQGor}. } Cartier subalgebra of $\sC_R$, so that $R$ is a Cartier $\sC$-module, and suppose that $R$ is reduced. In this case, as Blickle and St\"abler proved, $\uptau(R, \sC) \subset R$ coincides with the more classically defined (non-finitistic) test ideal of the Cartier algebra $\sC \subset \sC_R$: 
\[
\uptau\big(R, \sC\big) = \{c \in R \mid  \text{for all $r \in R^{\circ}$,  there exists $e$ and $\varphi \in \sC_e$ so that $\varphi(F^e_* r) = c$}\}. \footnote{Any such $c$ is called a \emph{test element for the Cartier algebra}, provided that $c$ is not a zero divisor. The existence of test elements is central to the theory of test ideals.}
\]

Therefore, $R$ is $F$-regular with respect to $\sC \subset \sC_R$ if for every $r \in R^{\circ}$, there exist $e$ and $\varphi \in \sC_e$ such that $\varphi(F^e_* r)=1$. In other words, for all $r \in R^{\circ}$ we have that the $R$-module inclusion
\[
R \to F^e_*R, \quad 1 \mapsto F^e_* r
\]
splits for $e \gg 0$ by a splitting map $\varphi \: F^e_* R \to R$ in $\sC_e$. 

If $\sC \subset \sC_R$, then we say $\sC$ is $F$-regular to mean that $R$ is $F$-regular as a Cartier $\sC$-module. The ring $R$ itself is said to be strongly $F$-regular if $\sC_R$ is $F$-regular.

Finally, if $R$ is reduced and $\sC$ is non-degenerate, the ideal $\uptau\big(R, \sC\big) \subset R$ can be characterized as the smallest ideal of $R$ that contains a non-zero divisor and is compatible with all $\varphi \in \sC_e$, for all $e$. 
In general, an ideal $I \subset R$ is said to be \emph{compatible} with $\varphi \in \Hom_R (F^e_* R, R)$ if 
\[
\varphi(F^e_* I) \subset I.
\]
We may also say either $I$ is $\varphi$-compatible or $\varphi$ is $I$-compatible, see \cite{MehtaRamanathanFrobeniusSplittingAndCohomologyVanishing} and \cite[\S 3A]{SmithZhang}. 

This notion of compatibility between $I$ and $\varphi$ is important because if $\vp$ is $I$-compatible then $\varphi$ restricts to a unique $\overline{\varphi} \in \Hom_{R/I}(F^e_* R/I, R/I)$ making the following diagram commutative
\[
\xymatrix{
F_*^e R \ar[r]^-{\varphi} \ar[d]_-{} & R\ar[d]^-{}\\ 
 F_*^e R/I \ar[r]^-{\overline{\varphi}} & R/I
}
\]

Finally, we record a criterion for verifying the $F$-regularity of a Cartier algebra $\sC \subset \sC_R$ that we will use of later on. We presume it is well-known among experts, but we give a proof for sake of completeness. We are thankful to Karl Schwede for bringing it to our attention, thus significantly simplifying part of our argument.

\begin{proposition}[{\cf\cite[Proposition 4.5]{BMRSClusterAlgebra}, \cite[Theorem 3.3]{HochsterHunekeTightClosureAndStrongFRegularity}, \cite[Lemma 2.3]{BlickleStablerFunctorialTestModules} }] \label{prop.KeyReduction}
Let $R$ be a ring, $\sC \subset \sC_R$ a Cartier $R$-algebra, and $f \in R^\circ$. Suppose that $R_f$ is an $F$-regular $\sC$-module and moreover that there is $\varphi \in \sC_e$, for some $e$, such that $\varphi(F^e_* f) =1$. Then $R$ is an $F$-regular $\sC$-module, i.e. $\sC$ is $F$-regular.
\end{proposition}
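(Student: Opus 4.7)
The plan is to verify $F$-regularity of $R$ as a $\sC$-module directly from the definition: for every $r \in R^{\circ}$ we must produce some $\Theta \in \sC_{e''}$ with $\Theta(F^{e''}_{*} r) = 1$. The idea is to combine the two hypotheses in two stages. First, use $F$-regularity of $R_{f}$ to manufacture a map in $\sC$ that sends $F^{e'}_{*} r$ to a power of $f$. Second, iterate $\varphi$ to eat up that power of $f$, converting it back to $1$.

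Stage 1. Since $f \in R^{\circ}$, also $r \in R_{f}^{\circ}$. By $F$-regularity of $R_{f}$ over the localized Cartier algebra $\sC_{f}$, there is $\tilde{\psi} \in (\sC_{f})_{e'}$ for some $e'$ such that $\tilde{\psi}(F^{e'}_{*} r) = 1$ in $R_{f}$. Using $F$-finiteness of $R$, one has $(\sC_{f})_{e'} = (\sC_{e'})_{f} \subset \Hom_{R_{f}}(F^{e'}_{*} R_{f}, R_{f})$, so we may write $\tilde{\psi} = \psi/f^{m}$ with $\psi \in \sC_{e'}$ and $m \geq 0$. Clearing denominators in $R$ (and absorbing any leftover $f$-torsion) yields an identity $f^{j} \psi(F^{e'}_{*} r) = f^{N}$ in $R$ for some $j, N \geq 0$. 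Since $\sC_{e'}$ carries an $R$-bimodule structure with left multiplication landing inside $\sC_{e'}$, we may replace $\psi$ by $f^{j} \cdot \psi$ and thereby assume $\psi(F^{e'}_{*} r) = f^{N}$ as an honest equality in $R$.

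Stage 2. For the $k$-fold product $\varphi^{k} := \varphi \cdot \varphi \cdots \varphi \in \sC_{ke}$, a short induction using the $R$-linearity identity $\varphi^{k}(F^{ke}_{*} a^{p^{ke}} y) = a \cdot \varphi^{k}(F^{ke}_{*} y)$ gives
\[
\varphi^{k}\bigl(F^{ke}_{*} f^{M_{k}}\bigr) = 1, \qquad M_{k} := 1 + p^{e} + p^{2e} + \cdots + p^{(k-1)e},
\]
since in the inductive step $f^{M_{k+1}} = f^{M_{k}} \cdot f^{p^{ke}}$ lets one pull an $f$ past $\varphi^{k}$ and then apply $\varphi$ to the remaining $F^{e}_{*} f$.

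Stage 3. Pick $k$ with $M_{k} \geq N$ and set $T := M_{k} - N$. Form $\Theta := \varphi^{k} \cdot (f^{T} \cdot \psi) \in \sC_{ke + e'}$. By construction,
\[
\Theta(F^{ke + e'}_{*} r) = \varphi^{k}\bigl(F^{ke}_{*}\, f^{T} \psi(F^{e'}_{*} r)\bigr) = \varphi^{k}\bigl(F^{ke}_{*} f^{M_{k}}\bigr) = 1,
\]
which is exactly what we wanted. The argument is entirely formal; the only delicate point is the bookkeeping with the $R$-bimodule structure on $\sC$, namely the identification $(\sC_{f})_{e'} = (\sC_{e'})_{f}$ under localization and the absorption of powers of $f$ into $\psi$ via the left and right actions. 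No conceptually harder step is involved.
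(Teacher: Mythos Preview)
Your proof is correct and shares the paper's two-stage structure: use $F$-regularity of $R_f$ to reach a power of $f$, then iterate $\varphi$ to send that power to $1$. The packaging differs slightly. In Stage~1 the paper works through test ideals, invoking $\uptau(R_f,\sC)=\uptau(R,\sC)_f$ to obtain $f^n\in\uptau(R,\sC)$ in one stroke, whereas you verify the splitting criterion element by element, localizing $\sC_{e'}$ by hand and clearing denominators for each $r$; your route is a touch more elementary (it sidesteps the cited localization compatibility of $\uptau$) but a touch longer. In Stage~2 the paper runs an induction on $m$ producing, for \emph{every} $m$, some $\psi\in\sC$ with $\psi(F^d_*f^m)=1$ (right-multiplying by $f^{p^d-1}$ at each step), while you compute the specific exponents $M_k$ hit by $\varphi^k$ and then pad with $f^T$ on the left; these are interchangeable. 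One simplification available to you: since $f\in R^\circ$, the map $R\to R_f$ is injective, so once $\psi(F^{e'}_*r)=f^m$ holds in $R_f$ it already holds in $R$, and the extra factor $f^j$ in Stage~1 is unnecessary.
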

\begin{proof}
We must prove that $1 \in \uptau(R, \sC)$. Then our first hypothesis is $\uptau(R_f, \sC) \ni 1$. However, $\uptau(R_f, \sC) = \uptau(R, \sC)_f$. Putting these two statements together we get that $f^n \in \uptau(R, \sC)$ for some $n \in \mathbb{N}$.

If there is $e \in \bN$ and $\psi \in \sC_e$ such that $\psi(F^e_* f^n) = 1$, then we would be done, for
\[
1=\psi(F^e_* f^n) \in \psi\big(F^e_* \uptau(R,\sC)\big) \subset \uptau(R, \sC).
\]

Our second hypothesis is that there exists $e \in \bN$ and $\varphi \in \sC_e$ such that $\varphi(F^e_* f) = 1$. We prove inductively that the same holds for all powers of $f$. Indeed, say $\psi \in \sC_d$ is such that $\psi(F^d_* f^{m-1})=1$. Then it follows that $\vartheta \coloneqq \varphi \cdot \psi \cdot f^{p^d-1}$ is such that $\vartheta(F^{e+d}_*f^m)=1$, for
\begin{align*}
\vartheta(F^{e+d}_* f^m) = \varphi\Big(F_*^e \psi\big(F^d_*f^{p^d-1}f^m\big)\Big) = \varphi\Big(F_*^e \psi\big(F^d_*f^{p^d}f^{m-1}\big)\Big) &= \varphi\Big(F_*^e f\psi\big(F^d_*f^{m-1}\big)\Big)\\
& = \varphi\Big(F_*^e f \cdot 1\Big) =1.
\end{align*}
\end{proof}

\section{Diagonal Cartier algebras and diagonal $F$-regularity}

In \cite{SmolkinSubadditivity}, the second named author introduced the Cartier algebra consisting of $p^{-e}$-linear maps compatible with the diagonal closed embedding $\Delta_2 \: R \otimes R \to R$. Here we generalize this construction to higher diagonals and verify these have the required basic properties, including an analogous subadditivity formula.

For this we consider $\Delta_n \: R^{\otimes n} \to R$ the $n$-th diagonal closed embedding given by the rule $r_1 \otimes \cdots \otimes r_n \mapsto r_1 \cdots r_n$. Recall our convention that all tensor products are defined over $\kay$ unless otherwise explicitly stated. Let $\mathfrak{d}_n$ be the kernel of $\Delta_n$.

\begin{definition}[{Diagonal Cartier algebras, \cf \cite[Notation 3.7]{SmolkinSubadditivity}}] \label{def.diagCartAlg}
Let $R$ be a $\kay$-algebra. For $n\in \bN$ we define the \emph{$n$-th diagonal Cartier algebra of $R/\kay$}, denoted by $\mathcal{D}^{(n)}(R)$, to be in degree $e$
\[
\sD^{(n)}_{e}(R) = \big\{\overline{\varphi} \in \sC_{e,R} \bigm| \text{$\varphi \in \sC_{e,{R^{\otimes n}}}$ and $\varphi$ is $\mathfrak{d}_n$-compatible} \big\}.
\]
In other words, $\sD^{(n)}_{e}(R) \subset \sC_{e,R}$ consists of $R$-linear maps $\varphi \: F^e_* R \to R$ such that there is a lifting $\widehat{\varphi}\: F^e_* R^{\otimes n} \to R^{\otimes n}$ making the following diagram commutative:
\begin{equation*} 
\xymatrix{
F_*^e R^{\otimes n} \ar[r]^-{\widehat{\varphi}} \ar[d]_-{F^e_* \Delta_n} & R^{\otimes n}\ar[d]^-{\Delta_n}\\ 
 F_*^e R\ar[r]^-{\varphi} & R
}
\end{equation*}
It is straightforward to verify $\sD^{(n)}(R)$ is a Cartier subalgebra of $\sC_R$, see \cite[Proposition 3.2]{SmolkinSubadditivity} and \cite[Definition 2.10]{BlickleSchwedeTuckerFSigPairs1}. When the ring $R$ is clear from context, we will refer to this Cartier algebra simply as $\sD^{(n)}$. 
\end{definition}

\begin{definition}[Diagonal $F$-regularity]
We say that a $\kay$-algebra $R$ is \emph{$n$-diagonally $F$-regular} if $\sD^{(n)}(R)$ is $F$-regular. We say that $R$ is \emph{diagonally $F$-regular} if $\sD^{(n)}(R)$ is $F$-regular for all $n\in \mathbb{N}$.
\end{definition}

\begin{remark}[]
Note that $R$ is diagonally $F$-split if and only if $\sD^{(2)}(R)$ is $F$-pure, and so 2-diagonal $F$-regularity can be seen as a strengthening of diagonal $F$-splitting\footnote{See \cite{PayneFrobeniusSplitToric, RamanathanA.1985Svaa, RamananRamanathanProjectiveNormality} for more on diagonal $F$-splittings and why they are important.}.  Indeed, a ring $R$ is defined to be diagonally $F$-split whenever there is a splitting $\varphi \in \sC_{R^{\otimes 2}}$  compatible with $\mathfrak d_2$. It is clear that $\sD^{(2)}(R)$ is $F$-pure whenever $R$ is diagonally $F$-split. On the other hand, suppose that $\varphi \in \sD^{(2)}_e(R)$ is a splitting. Then $\vp$ admits a lifting $\widehat \vp$ in $\sC_{e, R^{\otimes 2}}$, with $\vp(1 \otimes 1) = 1\otimes 1 + f$, for some $f\in \mathfrak d_2$. Further, we have that $\vp \otimes \vp$ is an $F$-splitting of $R^{\otimes 2}$. It follows that $\widehat \vp - f (\vp \otimes \vp)$ is an $F$-splitting of $R^{\otimes 2}$ compatible with $\mathfrak d_2$.
\end{remark}

\subsection{Diagonal test ideals}
The goal in this section is to define the test ideal $\uptau\big(R,\sD^{(n)}; \mathfrak{a}^t\big)$ and record the properties that we will need in the study of USTP, in particular, a subadditivity formula as introduced in \cite{SmolkinSubadditivity}. Of course, this test ideal is nothing but a particular case of $\uptau\big(M,\sC; \mathfrak{a}^t\big)$ for an ideal $\fra$ on $R$ and a nonnegative real number $t$, as in \cite[\S 4]{BlickleStablerFunctorialTestModules}. 

Let $\sC$ be a Cartier $R$-algebra, $\fra \subset R$ an ideal and $t \in \bR_{\geq 0}$. Then one can define a Cartier algebra $\sC^{\fra^t} \subset \sC$ by setting $\sC_e^{\fra^t} \coloneqq \sC_e \cdot \fra^{\lceil t(q-1) \rceil}$ in each degree $e$. Then one defines
\[
\uptau\big(M,\sC; \mathfrak{a}^t\big) \coloneqq \uptau\Big(M,\sC^{\mathfrak{a}^t} \Big) 
\]
The point in making this distinction is mainly ideological. We simply want to think of this object as the test ideal of $\fra^t$ with respect to some extra data. By plugging in $M=R$ and $\sC = \sD^{(n)}$ we get what we call the \emph{$n$-th diagonal test ideal of $\fra^t$}. This test ideal inherits many of the standard properties test ideals enjoy; see \cite[\S 4]{BlickleStablerFunctorialTestModules} for a complete account. However, we isolate the three properties conducive to studying USTP via test/multiplier ideals.

\begin{proposition}[Properties of diagonal test ideals for USTP] \label{prop.basicProperties}
Let $R$ be a reduced $\kay$-algebra, $\sC$ a Cartier $R$-algebra, $\fra\subset R$ an ideal containing a regular element,\footnote{By a regular element we mean a nonzerodivisor.} $t\in \bR_{\geq 0}$ and $n, m \in \bN$. Then the following properties hold.
\begin{enumerate}

\item (Unambiguity) $\uptau\big(R,\sD^{(n)}; \mathfrak{a}^{mt}\big)=\uptau\Big(R,\sD^{(n)}; \big(\fra^m\big)^t\Big)$,

\item (Fundamental lower-bound) $\mathfrak{a} \cdot \uptau\big(R,\sD^{(n)}\big) \subset \uptau\big(R,\sD^{(n)}; \mathfrak{a}\big)$, so that $\mathfrak{a} \subset \uptau\big(R,\sD^{(n)}; \mathfrak{a}\big)$ if $R$ is diagonally $F$-regular,

\item (Subadditivity) $\uptau\big(R, \sD^{(n)}; \fra^{tn}\big) \subset \uptau\big(R, \sC_R; \fra^{t}\big)^n$.
\end{enumerate}

\end{proposition}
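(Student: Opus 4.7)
The plan is to prove items (a), (b), and (c) in turn. Items (a) and (b) should fall out of the definitions plus standard test-ideal manipulations, while (c) is where the real work happens and where the diagonal subadditivity of \cite{SmolkinSubadditivity} must be generalized from the two-fold to the $n$-fold diagonal.

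For (a), I would argue that the Cartier subalgebras
\[
\sD^{(n), \fra^{mt}}_e = \sD^{(n)}_e \cdot \fra^{\lceil mt(q-1)\rceil} \quad\text{and}\quad \sD^{(n), (\fra^m)^t}_e = \sD^{(n)}_e \cdot \fra^{m\lceil t(q-1)\rceil}
\]
agree in each degree up to a power of $\fra$ bounded by $m$, since $\lceil mt(q-1)\rceil \leq m\lceil t(q-1)\rceil < \lceil mt(q-1)\rceil + m$. This bounded discrepancy is absorbed into $q = p^e$ for $e$ large enough, so both subalgebras yield the same test ideal. For (b), I would use the test-element description: for any $a \in \fra$ and generator $\phi(F^e_* c)$ of $\uptau(R, \sD^{(n)})$ with $\phi \in \sD^{(n)}_e$ and $c$ a test element, the right-action identity
\[
a \cdot \phi(F^e_* c) = \phi\bigl(F^e_*(a^q c)\bigr),
\]
combined with $a^q \in \fra^q \subset \fra^{q-1} = \fra^{\lceil 1\cdot(q-1)\rceil}$, exhibits $a \cdot \phi(F^e_* c)$ as a generator of $\uptau(R, \sD^{(n)}; \fra)$. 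The second assertion of (b) is then immediate, as $\uptau(R, \sD^{(n)}) = R$ when $R$ is diagonally $F$-regular.

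The main content is (c). By (a), it suffices to show $\uptau\bigl(R, \sD^{(n)}; (\fra^n)^t\bigr) \subset J$, where $J := \uptau(R; \fra^t)^n$. I would verify that every generator $\phi(F^e_*(bc))$ of the left-hand side (with $\phi \in \sD^{(n)}_e$, $b \in \fra^{n\lceil t(q-1)\rceil}$, and $c$ an appropriately chosen test element) lies in $J$. Fixing a factorization $b = b_1 \cdots b_n$ with $b_i \in \fra^{\lceil t(q-1)\rceil}$ and taking $c = c_0^n$ for a test element $c_0$, I would lift $bc$ along $\Delta_n$ to $x := (b_1 c_0) \otimes \cdots \otimes (b_n c_0) \in R^{\otimes n}$ and apply the commutative square defining $\sD^{(n)}$, using a lift $\widehat\phi \in \sC_{e, R^{\otimes n}}$ of $\phi$:
\[
\phi(F^e_* (bc)) \;=\; \Delta_n\bigl(\widehat\phi(F^e_* x)\bigr) \;=\; \Delta_n\bigl((\widehat\phi \cdot (b_1 \otimes \cdots \otimes b_n))(F^e_*(c_0^{\otimes n}))\bigr).
\]

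The decisive step, and the main obstacle, is to show that the argument of $\Delta_n$ on the right lies in $\uptau(R; \fra^t)^{\otimes n} \subset R^{\otimes n}$, since then applying $\Delta_n$ produces the desired element of $\uptau(R; \fra^t)^n = J$. This reduces to a ``product property'' for test ideals on the tensor product: the ideal $\uptau(R; \fra^t)^{\otimes n}$ must be compatible with every element of $\sC_{e, R^{\otimes n}} \cdot (\fra^{\lceil t(q-1)\rceil})^{\otimes n}$, not merely with those that factor as tensor products of Cartier maps on $R$. I would approach this by writing $\uptau(R; \fra^t)^{\otimes n}$ as the product of the factorwise extensions $\uptau(R; \fra^t)^{(i)} := R \otimes \cdots \otimes \uptau(R; \fra^t) \otimes \cdots \otimes R$, checking stability of each factor under the $i$-th slot's Cartier action, and combining via the bimodule structure of $\sC_{R^{\otimes n}}$---an argument designed, in the spirit of \cite{SmolkinSubadditivity}, to avoid any requirement that $\kay$ be perfect.
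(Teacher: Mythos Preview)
Your treatment of (a) and (b) is correct and matches the paper's: the same ceiling inequalities $\lceil mt(q-1)\rceil \le m\lceil t(q-1)\rceil \le \lceil mt(q-1)\rceil + m$ handle (a), and for (b) both you and the paper premultiply by a suitable power of an element of $\fra$ to land in the twisted Cartier algebra. For (c) your architecture is also the paper's: lift along $\Delta_n$, show that $\uptau(R,\sC_R;\fra^t)^{\otimes n}\subset R^{\otimes n}$ is stable under every map $\widehat\phi\cdot(b_1\otimes\cdots\otimes b_n)$ with $\widehat\phi\in\sC_{e,R^{\otimes n}}$ and $b_i\in\fra^{\lceil t(q-1)\rceil}$, then push back down. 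The only organizational difference is that the paper first proves the ideal-level containment $\uptau\bigl(R^{\otimes n},\sC_{R^{\otimes n}};(\fra^{\otimes n})^t\bigr)\subset\uptau(R,\sC_R;\fra^t)^{\otimes n}$ by minimality and then descends via \cite[Proposition~3.6]{SmolkinSubadditivity}, whereas you lift generators one at a time; these are equivalent.

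The gap is in how you propose to verify that stability. Writing $\uptau^{\otimes n}=\prod_i\uptau^{(i)}$ and ``checking stability of each factor under the $i$-th slot's Cartier action, and combining via the bimodule structure'' does not work as stated: a general $\widehat\phi\in\sC_{e,R^{\otimes n}}$ has no intrinsic ``$i$-th slot action'', and products of $\phi$-compatible ideals are not $\phi$-compatible in general, so the combining step fails. The paper supplies the missing mechanism. Using the canonical surjection $(F^e_*R)^{\otimes n}\twoheadrightarrow F^e_*(R^{\otimes n})$ (which is what allows $\kay$ to be imperfect), one embeds $\sC_{e,R^{\otimes n}}\hookrightarrow\Hom_{R^{\otimes n}}\bigl((F^e_*R)^{\otimes n},R^{\otimes n}\bigr)$, and \cite[Corollary~3.10]{SmolkinSubadditivity} identifies the target with $\Hom_R(F^e_*R,R)^{\otimes n}$. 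Hence every $\widehat\phi$ decomposes as a finite sum $\sum_j\varphi_{j,1}\otimes\cdots\otimes\varphi_{j,n}$ with $\varphi_{j,k}\in\sC_{e,R}$, and for pure tensors the stability is immediate:
\[
\Bigl(\textstyle\bigotimes_k\varphi_{j,k}\Bigr)\Bigl(F^e_*\textstyle\bigotimes_k b_k\,\uptau(R,\sC_R;\fra^t)\Bigr)=\textstyle\bigotimes_k\varphi_{j,k}\bigl(F^e_*b_k\,\uptau(R,\sC_R;\fra^t)\bigr)\subset\uptau(R,\sC_R;\fra^t)^{\otimes n}.
\]
This tensor decomposition of $\widehat\phi$ is exactly what your ``$i$-th slot'' language presupposes; once you invoke it, your argument and the paper's coincide.
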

\begin{proof} The unambiguity property (a) holds quite generally from observing that
\[
\lceil mt(q-1) \rceil \leq m\lceil t(q-1) \rceil \leq \lceil mt(q-1) \rceil + m
\]
so that
\[
  \fra^{\lceil mt(q-1) \rceil} \supset \fra^{m\lceil t(q-1) \rceil} \supset \fra^{\lceil mt(q-1) \rceil + m} =\fra^{\lceil mt(q-1) \rceil} \cdot \fra^m.
\]
Hence a test element for $\left(\sD^{(n)}\right)^{\fra^{mt}}$ is the same as a test element for $\left(\sD^{(n)}\right)^{(\fra^m)^t}$.

For the fundamental lower-bound (b), if we take $b \in \fra\cap R^\circ$ and $c$ a test element for $\sD^{(n)}$, then $bc$ is a test element for $\left(\sD^{(n)}\right)^{\fra}$. For if $a\in R^{\circ}$, then there exists $\varphi \in \sD^{(n)}_e$ such that $\varphi\big(F^e_* b^{q-1}a\big)=c$. But $\varphi\big(F^e_* b^{q-1}\cdot -\big)$ belongs to $\left(\sD_e^{(n)}\right)^{\fra}$ so we are done.

The proof for the subadditivity formula (c) is similar to the one in \cite{SmolkinSubadditivity}, though here we do not assume that $\kay$ is perfect. As $(F^e_* R)^{\otimes n}$ canonically surjects onto $F^e_* (R^{\otimes n})$, we have 
\[
  \sC_{R^{\otimes n},e} \subset \Hom_{R^{\otimes n}}\bigl( (F^e_*R)^{\otimes n}, R^{\otimes n}\bigr)
\]
for all $e>0$.
Now let $\vp \in\sC_{R^{\otimes n},e}$. By the above inclusion, combined with \cite[Corollary 3.10]{SmolkinSubadditivity}, it follows that $\vp$ induces an element $\vp' \in \Hom_{R}\bigl( F^e_*R, R\bigr)^{\otimes n}$, which we can be expressed as
\[
\vp' = \sum_{j} \vp_{j, 1} \otimes \cdots \otimes \vp_{j,n}. 
\]
where $\vp_{j, k} \in \Hom_R(F^e_* R, R)$. 
Further, given any $x \in \bigl(\fra^{\otimes n} \bigr)^{\lceil t(q-1) \rceil}$, we can write
\[
  x = \sum_i x_{i,1} \otimes \cdots \otimes x_{i, n}
\]
where $x_{i, k} \in \fra^{\lceil t(q-1) \rceil}$. It follows that the map $\vp \cdot x= \vp(F^e_* x \cdot -)$ induces the element
\[
(\varphi \cdot x)'=\vp(F^e_* x \cdot -)' = \sum_{i,j} \vp_{j, 1}(F^e_* x_{i, 1}\cdot - ) \otimes \cdots \otimes \vp_{j,n}(F^e_* x_{i, n}\cdot - ). 
\]
As $\bigl(F^e_* \uptau(R, \sC_R; \mathfrak a^t)\bigr)^{\otimes n}$ canonically surjects onto $F^e_* \bigl(\uptau(R, \sC_R; \mathfrak a^t)^{\otimes n}\bigr)$, we see that
\begin{align*}
 \vp \Bigl(F^e_* \bigl( x \cdot \uptau(R, \sC_R; \mathfrak a^t)^{\otimes n} \bigr)\Bigr) &= \sum_{i,j} \vp_{j, 1}\bigl(F^e_* x_{i, 1} \uptau(R, \sC_R; \mathfrak a^t) \bigr) \otimes \cdots \otimes \vp_{j,n}\bigl(F^e_* x_{i, n} \uptau(R, \sC_R; \mathfrak a^t) \bigr)\\
 &\subset  \uptau(R, \sC_R; \mathfrak a^t)^{\otimes n}
\end{align*}
Thus we obtain
\[
  \uptau\left( R^{\otimes n}, \sC_{R^{\otimes n}}; \left(\mathfrak a^{\otimes n}\right)^t  \right) \subset \left(  \uptau(R, \sC_R; \mathfrak a^t) \right)^{\otimes n}
\]
by the minimality of the test ideal on the left. Then we apply $\Delta_n$ to both sides. On the right-hand side we get $\uptau(R, \sC_R; \mathfrak a^t)^n$. On the left-hand side we get 
something larger than $\uptau(R, \sD^{(n)}; \mathfrak a^{tn})$, by \cite[Proposition 3.6]{SmolkinSubadditivity}. The fact that $\sD^{(n)}$ is non-degenerate follows \emph{mutatis mutandis} from the same argument as in \cite[Theorem 3.11]{SmolkinSubadditivity}.
\end{proof}

We  develop a broader theory of diagonal Cartier algebras and diagonal $F$-regularity in a forthcoming preprint \cite{CarvajalSmolkinPristineMorphisms}. 

\section{USTP for diagonally $F$-regular singularities}

In this section we prove our main result, namely that USTP is satisfied by locally diagonally $F$-regular rings with $h$ equal to the dimension. We do this by making our discussion in the introduction rigorous. For this we establish:

\begin{theorem} \label{thm.MainTheorem}
Let $R$ be a diagonally $F$-regular $\kay$-algebra, and let $\mathfrak p \in \Spec R$ be an ideal of height $h$. Then $ \mathfrak{p}^{(hn)} \subset \mathfrak{p}^n $
for all $n \in  \mathbb{N}$. \label{thm.diagFRegUSTP}
\end{theorem}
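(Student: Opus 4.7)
The plan is to make rigorous the three-step chain sketched in the introduction, using the ideal $\mathfrak{t}\coloneqq\uptau\bigl(R,\sD^{(n)};\mathfrak p^{(hn)}\bigr)$ as the intermediate object. Specifically, I will establish
\[
  \mathfrak p^{(hn)} \overset{(1')}{\subset} \mathfrak t \overset{(2')}{\subset} \uptau\bigl(R,\sC_R;(\mathfrak p^{(hn)})^{1/n}\bigr)^{n} \overset{(3')}{\subset} \mathfrak p^n,
\]
where the first two containments are formal consequences of \autoref{prop.basicProperties} and the third is a classical Brian\c{c}on--Skoda-type statement.

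First I would dispatch $(1')$ and $(2')$. One may assume $h \geq 1$, so that $\mathfrak p$ is not a minimal prime and $\mathfrak p^{(hn)}$ contains a non-zerodivisor. For $(1')$, diagonal $F$-regularity says $\uptau(R,\sD^{(n)})=R$, so the fundamental lower bound \autoref{prop.basicProperties}(b) applied to $\fra=\mathfrak p^{(hn)}$ gives
\[
 \mathfrak p^{(hn)} = \mathfrak p^{(hn)}\cdot \uptau(R,\sD^{(n)}) \subset \uptau(R,\sD^{(n)};\mathfrak p^{(hn)}) = \mathfrak t.
\]
For $(2')$, I apply the diagonal subadditivity formula \autoref{prop.basicProperties}(c) with $\fra=\mathfrak p^{(hn)}$ and $t=1/n$, so that $\fra^{tn}=\fra$; this yields $\mathfrak t\subset \uptau(R,\sC_R;(\mathfrak p^{(hn)})^{1/n})^n$ directly.

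For $(3')$, it will suffice to show $\uptau(R,\sC_R;(\mathfrak p^{(hn)})^{1/n})\subset \mathfrak p$, after which raising to the $n$-th power yields the final containment in $\mathfrak p^n$. Since test ideals commute with localization, I would verify this after localizing at $\mathfrak p$. Using $\mathfrak p^{(hn)} R_\mathfrak p=(\mathfrak p R_\mathfrak p)^{hn}$ together with the unambiguity property \autoref{prop.basicProperties}(a) (whose proof carries over verbatim to $\sC_{R_\mathfrak p}$ in place of $\sD^{(n)}$),
\[
 \uptau\bigl(R_\mathfrak p;(\mathfrak p^{(hn)} R_\mathfrak p)^{1/n}\bigr) = \uptau\bigl(R_\mathfrak p;(\mathfrak p R_\mathfrak p)^{h}\bigr).
\]
Now $R_\mathfrak p$ is local of dimension $h$, so a standard Brian\c{c}on--Skoda-type containment for test ideals---$\uptau(S;I^{d+\ell})\subset I^{\ell+1}$ for any ideal $I$ in an $F$-finite ring $S$ with $d\geq\dim S$, applied with $\ell=0$---gives $\uptau(R_\mathfrak p;(\mathfrak p R_\mathfrak p)^{h})\subset \mathfrak p R_\mathfrak p$. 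Pulling back to $R$ yields $\uptau(R;(\mathfrak p^{(hn)})^{1/n})\subset \mathfrak p^{(1)}=\mathfrak p$, as required.

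The real content of the argument is packaged in $(1')$ and $(2')$ via the diagonal test ideal machinery of \autoref{prop.basicProperties}; the only non-formal ingredient, and thus the main obstacle, is the Brian\c{c}on--Skoda containment used in $(3')$. Its resolution, however, is classical: the relevant test-ideal Brian\c{c}on--Skoda statement is a well-known consequence of tight-closure theory, going back to Hochster--Huneke. Once that input is invoked, the rest of the proof is automatic, and the genuinely new content of the theorem has been encoded in the very definition of diagonal $F$-regularity.
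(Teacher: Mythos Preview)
Your proof is correct and follows essentially the same strategy as the paper's. The one place the paper is more careful is step $(3')$: the Brian\c{c}on--Skoda statement $\uptau(S;I^{d})\subset I$ is standardly formulated with $d$ equal to the number of generators of (a reduction of) $I$, not $\dim S$, so your invocation hides a reduction-to-infinite-residue-field step. The paper handles this explicitly by first localizing $R$ and disposing of the case where $\mathfrak p$ is maximal (where symbolic and ordinary powers agree), so that $\kappa(\mathfrak p)$ is transcendental over $\kay$ and hence infinite; then $\mathfrak p R_{\mathfrak p}$ admits a reduction $\mathfrak q$ generated by at most $h$ elements, and the standard Brian\c{c}on--Skoda gives $\uptau\bigl((\mathfrak p R_{\mathfrak p})^{h}\bigr)=\uptau(\mathfrak q^{h})\subset\mathfrak q\subset\mathfrak p R_{\mathfrak p}$.
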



\begin{proof} \label{rem.RemarkGemini}  This containment of ideals can be checked locally, and so we may assume that $R$ is local. We can also assume that $\mathfrak p$ is not the maximal ideal of $R$, because in that case $\mathfrak{p}^{(n)} = \mathfrak p^{n}$ for all $n$. This implies that the residue field of $R$ at $\mathfrak p$ is transcendental over $\kay$, and so $\kappa(\mathfrak p)$ is infinite.\footnote{Recall that $\kappa(\mathfrak p)/\kay$ is algebraic if and only if $\mathfrak p$ is maximal in $R$. }

As mentioned in the introduction, our strategy for proving this theorem is to enlarge the scope of the proof in \cite[\S 6.3]{SchwedeTuckerTestIdealSurvey} and \cite[\S 4.3]{SmithZhang}. We just need to verify that the upper-bound
\begin{equation} \label{eqn.UpperBound}
\uptau\bigg(R, \sC_R;\Big(\mathfrak{p}^{(hn)} \Big)^{1/n} \bigg) \subset \mathfrak{p}
\end{equation}
holds for all $n\in \bN$, all prime ideals $\mathfrak{p} \subset R$, and all $R$ under our consideration. This inclusion can be checked after localizing at $\mathfrak{p}$, which means that we may assume $R$ is local with maximal ideal $\mathfrak{p}$ and infinite residue field. But then in that case $\mathfrak{p}^{(hn)}=\mathfrak{p}^{hn}$. Therefore, the left-hand side in \autoref{eqn.UpperBound} simply becomes
\[
\uptau\bigg(\Big(\mathfrak{p}^{(hn)} \Big)^{1/n} \bigg)=\uptau\Big(\big(\mathfrak{p}^{hn} \big)^{1/n} \Big) = \uptau \Big( \mathfrak{p}^{hn/n} \Big) = \uptau\big(\mathfrak{p}^h\big).
\]
Using \cite[Theorems 8.3.7 and 8.3.9]{HunekeSwansonIntegralClosure}, just as in \cite[Proof of Theorem 6.23]{SchwedeTuckerTestIdealSurvey}, we have that $\mathfrak{p}$ admits a reduction,\footnote{That is, a subideal with the same integral closure.} say $\mathfrak{q} \subset \mathfrak{p}$, generated by less than $h=\dim R_{\mathfrak{p}}$ elements.\footnote{Here it is where we need the residue field to be infinite.} Hence,
\[
\uptau\big(\mathfrak{p}^h\big)=\uptau\big(\mathfrak{q}^h\big) \subset \mathfrak{q} \subset \mathfrak{p},
\]
where the penultimate inclusion is nothing but a consequence of the Brian\c con--Skoda theorem for test ideals \cite{HochsterHunekeTightClosureInvariantThoeryBrianconSkoda}, \cite[Proposition 4.2]{BlickleStablerFunctorialTestModules}. The equality simply follows from unambiguity and the invariance of test ideals under integral closure, see \cite[Theorem 6.9]{SchwedeTuckerTestIdealSurvey}.

Thus, for all $\mathfrak{p} \in \Spec R$ and $n \in \mathbb{N}$ we have the following:
\begin{equation*}
  \mathfrak p^{(hn)} \overset{(1)}{\subset}
  \uptau \Big(R,\sD^{(n)};\mathfrak p^{(hn)}\Big) \overset{(\textrm{\Bicycle})}{=} 
  \uptau\left( R, \sD^{(n)}; \Big(\mathfrak p^{(hn)}\Big)^{n/n} \right) \overset{(2)}{\subset}
  \uptau\left(R, \sC_R; \Big(\mathfrak p^{(hn)}\Big)^{1/n} \right)^n \overset{(3)}{\subset}
  \mathfrak p^n
\end{equation*}
Here, (1) follows from $R$ being diagonally $F$-regular and \autoref{prop.basicProperties}. The equality (\Bicycle) is simply unambiguity, whereas (2) follows from subadditivity and (3) is just \autoref{eqn.UpperBound} raised to the $n$-th power.
\end{proof}
\begin{remark}
  Thus, if $R$ is diagonally $F$-regular, we have $\mathfrak p^{(dn)} \subset \mathfrak p^n$, where $d = \dim R$, for \emph{all} $\mathfrak p \in \Spec R$. If $R$ is local or graded, then
  in fact $\mathfrak p^{((d-1)n)} \subset \mathfrak p^n$ holds because symbolic and ordinary powers of the maximal ideal are the same. 
\end{remark} 

\section{On the class of diagonally $F$-regular rings}

Here is a simple observation about the class of diagonally $F$-regular rings.

\begin{proposition}
Essentially smooth $\kay$-algebras are diagonally $F$-regular. Further, $n$-diagonally $F$-regular $\kay$-algebras are strongly $F$-regular, in particular normal and Cohen--Macaulay.
\end{proposition}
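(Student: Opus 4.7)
The second assertion is essentially formal. By construction, $\sD^{(n)}$ is a Cartier subalgebra of the full Cartier algebra $\sC_R$, so every $\sC_R$-stable submodule of $R$ is automatically $\sD^{(n)}$-stable. Invoking the minimality characterization of the test ideal as the smallest submodule containing a nonzerodivisor and stable under all Cartier operators, this gives the monotonicity $\uptau\bigl(R,\sD^{(n)}\bigr) \subset \uptau\bigl(R,\sC_R\bigr)$. Hence, if $R$ is $n$-diagonally $F$-regular, the left-hand side equals $R$, so the right-hand side does too; that is, $R$ is strongly $F$-regular. Normality and Cohen--Macaulayness then follow from the classical theory of strong $F$-regularity.

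For the first assertion, let $R$ be essentially smooth over $\kay$. The plan is to show the stronger statement $\sD^{(n)} = \sC_R$, which immediately reduces $F$-regularity of $\sD^{(n)}$ to strong $F$-regularity of the regular ring $R$. Since essential smoothness is preserved under tensor products over $\kay$, the algebra $R^{\otimes n}$ is essentially smooth, hence regular, hence strongly $F$-regular. Because both $R^{\otimes n}$ and its quotient $R$ are regular, the diagonal ideal $\mathfrak d_n \subset R^{\otimes n}$ is locally, at each prime containing it, generated by a regular sequence (the standard fact that a surjection between regular local rings has kernel generated by part of a regular system of parameters).

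The remaining step---and the main technical obstacle---is the following Fedder-type lifting statement: if $I \subset S$ is an ideal in an $F$-finite regular ring $S$ which is locally generated by a regular sequence, then the restriction map $\Hom_S(F^e_* S, S)^I \to \Hom_{S/I}(F^e_* S/I, S/I)$ from $I$-compatible maps to maps on the quotient is surjective. I would establish this by localizing at primes containing $I$ to reduce to the case of a regular local ring with $I = (f_1, \dots, f_c)$ a regular sequence, and then construct the lift explicitly by pre-multiplying the canonical generator of $\Hom_S(F^e_* S, S)$ (as a rank-one free $F^e_* S$-module) by the element $(f_1 \cdots f_c)^{q-1}$, in the manner of Fedder's criterion; an $I$-compatible lift of a given $\varphi \in \Hom_{S/I}(F^e_* S/I, S/I)$ can then be written down directly in a basis of $F^e_*S$ adapted to the regular sequence. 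Applied with $S = R^{\otimes n}$ and $I = \mathfrak d_n$, this produces a $\mathfrak d_n$-compatible lift of every $\varphi \in \sC_{e,R}$, so $\sD^{(n)}_e = \sC_{e,R}$ for all $e$, completing the argument.
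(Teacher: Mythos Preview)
Your handling of the second assertion is correct and matches the paper's (which simply calls it ``obvious''): since $\sD^{(n)} \subset \sC_R$, the minimality characterization of test ideals gives $\uptau(R,\sD^{(n)}) \subset \uptau(R,\sC_R)$, and the conclusion follows.

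For the first assertion, your strategy is correct and reaches the same conclusion $\sD^{(n)} = \sC_R$ as the paper, but by a considerably longer route. The paper's argument is a one-liner from Kunz's theorem: since $R^{\otimes n}$ is regular, $F^e_* R^{\otimes n}$ is a \emph{projective} $R^{\otimes n}$-module. Given any $\varphi \in \sC_{e,R}$, the composite $\varphi \circ F^e_*\Delta_n \colon F^e_* R^{\otimes n} \to R$ is $R^{\otimes n}$-linear (with $R$ viewed as an $R^{\otimes n}$-module via $\Delta_n$), and projectivity lets one lift it along the surjection $\Delta_n \colon R^{\otimes n} \twoheadrightarrow R$ to an $R^{\otimes n}$-linear map $\widehat\varphi \colon F^e_* R^{\otimes n} \to R^{\otimes n}$ satisfying $\Delta_n \circ \widehat\varphi = \varphi \circ F^e_*\Delta_n$. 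This is exactly the compatibility condition defining $\sD^{(n)}_e$, so $\varphi \in \sD^{(n)}_e$.

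Your Fedder-type approach---passing to local rings, using that $\mathfrak d_n$ is generated by a regular sequence, and building the lift via premultiplication by $(f_1\cdots f_c)^{q-1}$---does work, but it is reproving by hand a special case of what projectivity delivers for free, and it leaves you with a local-to-global step that needs to be said carefully. The paper's projectivity argument avoids all of this. What your approach \emph{would} buy is an explicit formula for the lift, which is not needed here; the paper's approach buys brevity.
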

\begin{proof}
The second statement is obvious, whereas the former is a consequence of Kunz's theorem \cite{KunzCharacterizationsOfRegularLocalRings} just as in \cite[\S 7]{SmolkinSubadditivity}. Indeed, if $R$ is smooth over $\kay$, then $R^{\otimes n}$ is smooth and therefore regular for all $n$. Thus Kunz's theorem tells us that $F^e_* R^{\otimes n}$ is a projective $R^{\otimes n}$-module, which implies that $\sD^{(n)}(R) = \sC_R$ for all $n$. Similarly, if $R$ is a localization of $S$, where $S$ is a smooth $\kay$ algebra, then $R^{\otimes n}$ is a localization of $S^{\otimes n}$ and so $R^{\otimes n}$ is still regular. The result follows.
\end{proof}

It follows from the following proposition that the class of diagonally $F$-regular $\kay$-algebras is properly contained in the class of strongly $F$-regular ones. In the next subsection, we will show that the class of diagonally $F$-regular $\kay$-algebras  properly contains the class of essentially smooth algebras. 

We thank Linquan~Ma for giving us the following observation:

\begin{proposition}
Let $(R, \mathfrak m)$ be a local normal domain essentially of finite type over $\kay$, with $R/\mathfrak m$ infinite. If $R$ is diagonally $F$-regular, then the divisor class group $\Cl(R)$ is torsion-free. In fact, if $\Cl(R)$ has $r$-torsion\footnote{That is, some element of $\Cl(R)$ is annihilated by $r$.}, then $\sD^{(nr)}(R)$ is not $F$-regular for $n \gg 0$. \label{prop.diagFRegIsTorsFree}
\end{proposition}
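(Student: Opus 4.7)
The plan is to derive the first assertion as an immediate corollary of the second, and then to prove the second. If $\Cl(R)$ had nontrivial $r$-torsion for some $r > 1$, the second assertion would furnish an $n$ for which $\sD^{(nr)}(R)$ fails to be $F$-regular, contradicting the standing hypothesis that $R$ is diagonally $F$-regular.

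To prove the second assertion, the first task is to produce a height-$1$ prime $\mathfrak{p} \subset R$ whose class $[\mathfrak{p}] \in \Cl(R)$ is a nonzero torsion element, say of some order $r' > 1$ dividing $r$. In our setting---$R$ local normal essentially of finite type over $\kay$ with infinite residue field---such a prime can be produced by a moving/Bertini-type argument applied to an effective divisorial representative of the given torsion class. This is presumably the source of the quantifier ``$n \gg 0$'': one may need to pass to a large multiple of the given torsion class before it is represented by a prime divisor.

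Having found such a $\mathfrak{p}$, the divisorial ideal $\mathfrak{p}^{(r')}$ has trivial class and hence is principal, say $\mathfrak{p}^{(r')} = (f)$; reflexivity of divisorial powers then yields $\mathfrak{p}^{(kr')} = (f^k)$ for every $k \geq 1$. Now suppose, toward contradiction, that $\sD^{(nr)}(R)$ is $F$-regular. Inspection of the proof of \autoref{thm.MainTheorem} reveals that $F$-regularity of the \emph{single} Cartier algebra $\sD^{(N)}$ is all that is used---via \autoref{prop.basicProperties}(b)---to obtain $\mathfrak{q}^{(hN)} \subset \mathfrak{q}^N$ for every prime $\mathfrak{q}$ of height $h$. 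Applying this with $N = nr$, $h = 1$, and $\mathfrak{q} = \mathfrak{p}$ yields $\mathfrak{p}^{(nr)} \subset \mathfrak{p}^{nr}$, and hence equality (the reverse inclusion being automatic). Since $r' \mid r \mid nr$, the divisorial power $\mathfrak{p}^{(nr)} = (f^{nr/r'})$ is principal, and therefore so is the ordinary power $\mathfrak{p}^{nr}$.

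The argument is completed by the classical observation that, in a local domain, any nonzero ideal $I$ with a principal power $I^N = (g)$ is itself principal: the fractional ideal $g^{-1}I^{N-1}$ satisfies $I \cdot (g^{-1}I^{N-1}) = R$, so $I$ is invertible, and invertible ideals over a local ring are principal. Applied to $I = \mathfrak{p}$, this forces $[\mathfrak{p}] = 0$ in $\Cl(R)$, contradicting $r' > 1$. The main obstacle in this plan is thus the initial step of representing a torsion class by a height-$1$ prime; once such a prime is in hand, the remainder of the argument is either a direct consequence of \autoref{thm.MainTheorem} or a standard fact about invertible ideals in local rings.
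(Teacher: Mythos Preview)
Your argument and the paper's share the same skeleton: assume $\sD^{(nr)}$ is $F$-regular, invoke the proof of \autoref{thm.MainTheorem} to get $\mathfrak q^{(nr)} = \mathfrak q^{nr}$ for a height-$1$ prime $\mathfrak q$ whose class is nonzero and annihilated by $r$, observe that this power is principal, and derive a contradiction. The divergence is in the final step. The paper argues via analytic spread: since $\mathfrak q$ is non-principal in a normal local domain with infinite residue field, its analytic spread is at least $2$, so the Hilbert function $m \mapsto \mu(\mathfrak q^m)$ of the fiber cone agrees with a non-constant polynomial for $m \gg 0$; hence $\mathfrak q^{nr}$ cannot be principal once $n$ is large. \emph{That} is where the quantifier ``$n \gg 0$'' in the statement comes from, not the search for a prime representative. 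Your invertibility argument---$\mathfrak q^{N}$ principal forces $\mathfrak q$ invertible, hence principal over a local ring---is more elementary and in fact applies for every $n \geq 1$, so it actually yields a sharper conclusion than the one stated.

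Regarding the prime representative: the paper simply writes ``by assumption, there exists some non-principal prime ideal $\mathfrak q$ of height $1$ such that $\mathfrak q^{(r)}$ is principal'' and proceeds, so the step you flag as the main obstacle is not justified there either. Your speculation that ``$n \gg 0$'' arises because one must pass to a large multiple of the torsion class before it is represented by a prime is, however, off the mark: multiples of an $r$-torsion class cycle through at most $r$ values, any nonzero multiple still has order dividing $r$, and the choice of representative is made once and for all before $n$ enters the picture---so nothing in that step interacts with $n$.
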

\begin{proof}
Suppose $\sD^{(nr)}$ is $F$-regular for all $n$. Then for all prime ideals $\mathfrak p$, we have 
\[
  \mathfrak p^{(hnr)} \subset \mathfrak p^{nr}
\]
where $h$ is the height of $\mathfrak p$. By assumption, there exists some non-principal prime ideal $\mathfrak q$ in $R$ of height 1 such that $\mathfrak q^{(r)}$ is principal. Thus $\mathfrak q^{(rn)} = \mathfrak q^{rn}$ is principal for all $n$. 

However, this cannot happen. Indeed, since $R$ is a normal domain, we know that principal ideals of $R$ are integrally closed, and so the analytic spread of $\mathfrak q$ is at least $2$. This tells us that the fiber cone of $\mathfrak q$, 
\[
  F_{\mathfrak q} = \frac{R}{\mathfrak m} \oplus \frac{\mathfrak q}{\mathfrak m \mathfrak q} \oplus \frac{\mathfrak q^2}{\mathfrak m \mathfrak q^2}  \oplus \cdots
\]
has dimension at least 2, so the Hilbert function of $F_{\mathfrak q}$, $h(F_{\mathfrak q}, n)$, agrees with a non-constant polynomial for $n \gg 0$. But we know that $h(F_{\mathfrak q}, n) = \mu(\mathfrak q^n)$ by Nakayama's lemma, so $\mathfrak q^{rn}$ is not principal for $n \gg 0$. 
\end{proof}

\begin{example}
By the above proposition, we see that Veronese subrings of polynomial rings are never diagonally $F$-regular, \cf \cite[Example 6.9]{SmolkinSubadditivity}.
\end{example}

By \cite[Theorem G]{CarvajalFiniteTorsors}, if $s(R) > 1/2$ then $\Cl(R)$ is torsion-free. In light of \autoref{prop.diagFRegIsTorsFree}, we suspect there is an interesting connection between diagonally $F$-regular rings and rings with $F$-signature greater than $1/2$. For example, we pose the following question:

\begin{question}
  If $s(R) > 1/2$, is $R$ diagonally $F$-regular? In particular, is 
  \[
  \kay[x_1, \ldots, x_d]\bigm/\bigl(x_1^2 + \cdots + x_d^2\bigr)
 \]
diagonally $F$-regular for all $d \geq 4$? We note that there exist diagonally $F$-regular rings with $F$-signature less than $1/2$, by \autoref{thm.SegreDFR} and work of A.~Singh \cite[Example 7]{SinghFSignatureOfAffineSemigroup}.
\end{question}

The following proposition shows that the class of diagonally $F$-regular $\kay$-algebras is closed under tensor product. 
\begin{proposition}\label{prop.Products} Let $R$ and $S$ be $n$-diagonally $F$-regular $\kay$-algebras. Then $R\otimes S$ is a $n$-diagonally $F$-regular $\kay$-algebra.
  \label{prop.prodIsDiagFReg}
\end{proposition}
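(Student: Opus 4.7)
My strategy is to construct, for each $\varphi \in \sD^{(n)}_e(R)$ and $\psi \in \sD^{(n)}_e(S)$, a natural ``external product'' $\Phi \in \sD^{(n)}_e(R \otimes S)$, and then conclude $F$-regularity of $\sD^{(n)}(R \otimes S)$ via \autoref{prop.KeyReduction}. The construction starts with the canonical isomorphism $(R \otimes S)^{\otimes n} \cong R^{\otimes n} \otimes S^{\otimes n}$, under which $\Delta_n^{R \otimes S} = \Delta_n^R \otimes \Delta_n^S$ and consequently $\mathfrak{d}_n^{R \otimes S} = \mathfrak{d}_n^R \otimes S^{\otimes n} + R^{\otimes n} \otimes \mathfrak{d}_n^S$. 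Combining this with the Smolkin-type identification $\sC_{e, R \otimes S} \cong \sC_{e, R} \otimes_\kay \sC_{e, S}$ (and its counterpart for $(R \otimes S)^{\otimes n}$) furnished by \cite[Corollary 3.10]{SmolkinSubadditivity}, I let $\Phi$ correspond to $\varphi \otimes \psi$ and $\widehat \Phi$ correspond to $\widehat \varphi \otimes \widehat \psi$, acting on simple tensors by $\Phi(F^e_*(r \otimes s)) = \varphi(F^e_* r) \otimes \psi(F^e_* s)$ and $\widehat \Phi(F^e_*(a \otimes b)) = \widehat \varphi(F^e_* a) \otimes \widehat \psi(F^e_* b)$. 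That $\widehat \Phi$ lifts $\Phi$ along the diagonals is immediate from the analogous property of $(\widehat \varphi, \varphi)$ and $(\widehat \psi, \psi)$, and $\mathfrak{d}_n^{R \otimes S}$-compatibility of $\widehat \Phi$ reduces to the $\mathfrak{d}_n^R$-compatibility of $\widehat \varphi$ and the $\mathfrak{d}_n^S$-compatibility of $\widehat \psi$ by checking on each of the two summands of $\mathfrak{d}_n^{R \otimes S}$. This places $\Phi$ in $\sD^{(n)}_e(R \otimes S)$.

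With the external product available, I apply \autoref{prop.KeyReduction} to $\sC \coloneqq \sD^{(n)}(R \otimes S)$: it suffices to find $f \in (R \otimes S)^\circ$ such that $(R \otimes S)_f$ is an $F$-regular $\sC$-module and some $\Phi_e \in \sC_e$ sends $F^e_* f$ to $1$. I take $f = r \otimes s$ with $r \in R^\circ, s \in S^\circ$ chosen so that $R_r$ and $S_s$ are essentially smooth over $\kay$; such $r, s$ exist in the $F$-finite setting by density of the smooth locus. Then $(R \otimes S)_{r \otimes s} = R_r \otimes S_s$ is essentially smooth over $\kay$, hence diagonally $F$-regular by the earlier proposition, and since $\sD^{(n)}$ commutes with localization this verifies the first hypothesis. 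For the second, diagonal $F$-regularity of $R$ and $S$ provides some $e$ and $\varphi \in \sD^{(n)}_e(R), \psi \in \sD^{(n)}_e(S)$ with $\varphi(F^e_* r) = 1 = \psi(F^e_* s)$, so the associated external product $\Phi$ satisfies $\Phi(F^e_*(r \otimes s)) = 1 \otimes 1 = 1$. \autoref{prop.KeyReduction} now concludes that $\sC = \sD^{(n)}(R \otimes S)$ is $F$-regular, as required.

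The main difficulty lies in rigorously producing the external product $\Phi$ when $\kay$ is not perfect. In that case the canonical map $F^e_* R \otimes_\kay F^e_* S \to F^e_*(R \otimes S)$ is merely a surjection, and descending $\varphi \otimes \psi$ through it requires the sharper identification $F^e_*(R \otimes_\kay S) \cong F^e_* R \otimes_{F^e_* \kay} F^e_* S$, which is precisely what is packaged in \cite[Corollary 3.10]{SmolkinSubadditivity}. A secondary concern is guaranteeing the existence of $r, s$ with essentially smooth localizations in the imperfect case; should this be delicate, a clean alternative is to establish directly the subadditivity-style inclusion $\uptau(R, \sD^{(n)}(R)) \otimes_\kay \uptau(S, \sD^{(n)}(S)) \subset \uptau(R \otimes S, \sD^{(n)}(R \otimes S))$, modeled on the proof of \autoref{prop.basicProperties}(c), from which the claim follows at once since both left-hand factors are the unit ideal by hypothesis.
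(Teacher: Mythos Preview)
Your external-product construction $\varphi \otimes \psi \in \sD^{(n)}_e(R \otimes S)$ is exactly the key lemma the paper uses as well, and your diagram-chase justification matches theirs. The difference lies in how each side exploits it. The paper bypasses \autoref{prop.KeyReduction} entirely and instead argues via global $F$-signature \cite{deStefaniPolstraYao-Globalizing}: if $\varphi\colon F^e_* R \twoheadrightarrow R^{\oplus a}$ and $\psi\colon F^e_* S \twoheadrightarrow S^{\oplus b}$ are surjections whose coordinate projections lie in $\sD^{(n)}(R)$ and $\sD^{(n)}(S)$ respectively, then $\varphi \otimes \psi$ gives a surjection $F^e_*(R\otimes S)\twoheadrightarrow (R\otimes S)^{\oplus ab}$ whose coordinate projections lie in $\sD^{(n)}(R\otimes S)$ by the external-product observation. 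Hence $a_e\bigl(R\otimes S,\sD^{(n)}\bigr)\geq a_e\bigl(R,\sD^{(n)}\bigr)\cdot a_e\bigl(S,\sD^{(n)}\bigr)$ and the $F$-signature is positive. This is a one-step argument requiring no auxiliary choice of $f$, no smoothness of any localization, and no analysis of how $\sD^{(n)}$ behaves under localization.

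Your route through \autoref{prop.KeyReduction} is sound in outline but leans on two points that are not free. First, the hypothesis you must verify is that $(R\otimes S)_f$ is $F$-regular as a $\sD^{(n)}(R\otimes S)$-module, not as a $\sD^{(n)}\bigl((R\otimes S)_f\bigr)$-module; your sentence ``since $\sD^{(n)}$ commutes with localization'' bridges this gap, but that compatibility is not established in the paper and, as the proof of \autoref{lemma:splitAJacobianElement} shows, takes real work even over a perfect field. Second, you correctly flag that over imperfect $\kay$ a strongly $F$-regular (hence regular-in-codimension-one) algebra need not have dense smooth locus, so the required $r,s$ may not exist. Your proposed fallback---proving $\uptau\bigl(R,\sD^{(n)}(R)\bigr)\otimes_\kay \uptau\bigl(S,\sD^{(n)}(S)\bigr)\subset \uptau\bigl(R\otimes S,\sD^{(n)}(R\otimes S)\bigr)$ directly---would indeed settle the matter, and is close in spirit to what the $F$-signature inequality encodes; the paper's argument can be read as a particularly clean packaging of exactly that idea.
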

\begin{proof}
  We prove this via global $F$-signatures \cite{deStefaniPolstraYao-Globalizing}. For simplicity, write $a  = a_e\bigl(R, \sD^{(n)}\bigr)$ and $b = a_e\bigl(S, \sD^{(n)}\bigr)$. Suppose 
  \[
    \varphi : F^e_* R \twoheadrightarrow R^{\oplus a}, \quad \psi: F^e_* S \twoheadrightarrow S^{\oplus b}
  \] 
  are surjections, such that each composition
  \[
  F^e_* R \xrightarrow{\varphi} R^{\oplus a} \xrightarrow{\pi_i} R
  \]
  is in $\sD^{(n)}(R)$. Similarly, each composition 
  \[
  F^e_* S \xrightarrow{\psi} S^{\oplus b} \xrightarrow{\sigma_j} S
  \]
  is in $\sD^{(n)}(S)$. Then we get a surjection of $R\otimes S$-modules
  \[
    F^e_* (R \otimes S) \cong F^e_* R \otimes F^e_* S \xrightarrow{\varphi \otimes \psi} R^{\oplus a} \otimes  S^{\oplus b} \cong \left( R \otimes S \right)^{\oplus ab}
  \]
  Then each composition 
  \[
    (\pi_i \circ \varphi) \otimes (\sigma_j \circ \psi) \colon F^e_* (R \otimes S) \xrightarrow{\varphi \otimes \psi} \left( R \otimes S \right)^{\oplus ab} \xrightarrow{\pi_i \otimes \sigma_j} R \otimes S
  \]
  is in the Cartier algebra $\sD^{(n)}(R \otimes S)$. Indeed, given any maps $\theta \in \sD^{(n)}_e(R)$ and  $\eta \in \sD^{(n)}_e(S)$, with liftings $\widehat \theta \in \sC_{e,R^{\otimes n}}$ and $\widehat \eta \in \sC_{e, S^{\otimes n}}$, one checks that $\widehat \theta \otimes \widehat \eta$ is a lifting of $\theta \otimes \eta$ by a diagram chase. Thus, $a_e\bigl(R \otimes S , \sD^{(n)}(R\otimes S)\bigr) \geq ab$. It follows that 
  \[
  s\bigl(R \otimes S, \sD^{(n)}(R \otimes S)\bigr) \geq s\bigl(R, \sD^{(n)}(R)) \cdot s(S, \sD^{(n)}(S)\bigr) > 0,
  \]
  as desired. 
\end{proof}

\subsection{Segre products of polynomial rings are diagonally $F$-regular} \label{section:segre}
The remainder of this section will be spent proving the following theorem
\begin{theorem} \label{thm.SegreDFR}
 Let $R$ be the Segre product $\kay[x_0, \ldots , x_r]\# \kay[y_0, \ldots, y_s]$, with $r,s > 0$, and $\kay$ perfect. Then $R$ is diagonally $F$-regular.
\label{thm.segreDiagFreg}
\end{theorem}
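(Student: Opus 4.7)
The plan is to apply \autoref{prop.KeyReduction} to reduce $F$-regularity of each $\sD^{(n)}(R)$ to a single compatibility check. Since $R[1/(x_0 y_0)]$ is the Laurent polynomial ring $\kay[(x_0 y_0)^{\pm 1}, x_i/x_0, y_j/y_0 : i,j > 0]$, it is essentially smooth over $\kay$ and hence diagonally $F$-regular by the preceding proposition. As $\uptau$ commutes with localization, $R[1/(x_0 y_0)]$ is $F$-regular as a $\sD^{(n)}(R)$-module. By \autoref{prop.KeyReduction} it therefore suffices, for each $n \geq 1$, to exhibit some $\varphi \in \sD^{(n)}_e(R)$ with $\varphi\bigl(F^e_*(x_0 y_0)\bigr) = 1$.

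To construct such a $\varphi$, I would work inside the ambient polynomial ring $A = \kay[x_0, \ldots, x_r, y_0, \ldots, y_s]$. Under the $\bG_m$-action with weight $+1$ on each $x_i$ and weight $-1$ on each $y_j$ one has $R = A^{\bG_m}$, and $R^{\otimes n} = (A^{\otimes n})^{\bG_m^n}$ for the componentwise action. Let $\psi \in \sC_{e,A}$ be the $A$-linear map that sends the basis element $F^e_*(x_0 y_0)$ of the free $A$-module $F^e_* A$ to $1$ and every other monomial basis element $F^e_*(x^\alpha y^\beta)$ (with $0 \leq \alpha_i, \beta_j < q$) to $0$. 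Since $\psi$ is $\bG_m$-equivariant of weight $0$, it restricts to an $R$-linear map $\varphi \colon F^e_* R \to R$ with $\varphi\bigl(F^e_*(x_0 y_0)\bigr) = 1$.

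The heart of the proof is to verify $\varphi \in \sD^{(n)}_e(R)$, i.e., to produce a $\mathfrak{d}_n$-compatible lift $\widehat \varphi \colon F^e_* R^{\otimes n} \to R^{\otimes n}$. Because $A$ is smooth, $\sD^{(n)}(A) = \sC_A$, so there exists a $\mathfrak{d}_n$-compatible lift $\widehat \psi \colon F^e_* A^{\otimes n} \to A^{\otimes n}$ of $\psi$. Writing $A^{\otimes n} = \kay[x^{(1)}, y^{(1)}, v, t]$ in the coordinates $v_i^{(j)} = x_i^{(j)} - x_i^{(1)}$ and $t_i^{(j)} = y_i^{(j)} - y_i^{(1)}$ for $j \geq 2$ (so that $\mathfrak{d}_n = (v,t)$), one may define $\widehat \psi$ explicitly on the corresponding free basis of $F^e_* A^{\otimes n}$ by
\[
  \widehat \psi\bigl(F^e_*(x^\alpha y^\beta v^\gamma t^\delta)\bigr) = \delta_{\gamma, 0}\,\delta_{\delta, 0}\,\psi\bigl(F^e_*(x^\alpha y^\beta)\bigr),
\]
which is manifestly $\mathfrak{d}_n$-compatible and lifts $\psi$. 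The desired $\widehat \varphi$ would then be obtained by restricting $\widehat \psi$ to $F^e_* R^{\otimes n}$, with compatibility with $\Delta_n^R = \Delta_n^A|_{R^{\otimes n}}$ inherited from that of $\widehat \psi$.

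The principal obstacle I anticipate is verifying the restriction property $\widehat \psi(F^e_* R^{\otimes n}) \subseteq R^{\otimes n}$. The chosen coordinates are adapted to $\mathfrak{d}_n$ but \emph{not} to the componentwise $\bG_m^n$-action: the difference variables $v_i^{(j)}, t_i^{(j)}$ fail to be $\bG_m^n$-homogeneous, so $\widehat \psi$ is not manifestly $\bG_m^n$-equivariant. To show that the coefficient of $F^e_*(x_0^{(1)} y_0^{(1)})$ of any $\bG_m^n$-invariant element of $F^e_* A^{\otimes n}$ is itself $\bG_m^n$-invariant, I would perform a careful bookkeeping of $\bG_m^n$-weights in the expansion of invariant elements in this basis; if direct bookkeeping is unwieldy, one could instead modify $\widehat \psi$ by a $\mathfrak{d}_n$-valued correction (leaving the induced $\psi$ unchanged) or average over the complementary torus $\bG_m^n/\bG_m$ to restore the required equivariance without disturbing $\mathfrak{d}_n$-compatibility.
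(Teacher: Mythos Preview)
Your overall strategy coincides with the paper's: reduce via \autoref{prop.KeyReduction} to producing, for each $n$, some $\varphi\in\sD^{(n)}_e(R)$ with $\varphi(F^e_* x_0y_0)=1$, and do this by working inside the ambient polynomial ring $S=\kay[x_0,\dots,x_r,y_0,\dots,y_s]$ and checking that an appropriate lift to $S^{\otimes n}$ restricts to $R^{\otimes n}$. Your candidate $\psi$ on $S$ is exactly the paper's $\varphi_e=\Phi^e\cdot x_0^{q-2}\cdots y_s^{q-1}$. However, two genuine gaps remain.

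First, a smaller one: the sentence ``As $\uptau$ commutes with localization, $R[1/(x_0y_0)]$ is $F$-regular as a $\sD^{(n)}(R)$-module'' is a non sequitur. Localization gives $\uptau(R_f,\sD^{(n)}(R))=\uptau(R,\sD^{(n)}(R))_f$, not $\uptau(R_f,\sD^{(n)}(R))=\uptau(R_f,\sD^{(n)}(R_f))$. You would need $\sD^{(n)}(R)_f=\sD^{(n)}(R_f)$, which is not automatic. The paper handles this via a separate lemma (\autoref{lemma:splitAJacobianElement}) showing that if $A_f$ is regular \emph{and} there exists $\psi\in\sD^{(n)}(A)$ with $\psi(F^e_* f)=1$, then $A_f$ is $F$-regular as a $\sD^{(n)}(A)$-module; the proof clears denominators in a lift over $(A_f)^{\otimes n}$.

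Second, the main issue: you explicitly flag the restriction $\widehat\psi(F^e_*R^{\otimes n})\subseteq R^{\otimes n}$ as the principal obstacle and do not verify it. This is precisely the heart of the theorem, and your three proposed routes are each problematic. The bookkeeping route is not carried out. The averaging route fails in a concrete way: the ideal $\mathfrak{d}_n^A$ is \emph{not} $\bG_m^n$-stable (for instance $x_0^{(1)}-x_0^{(2)}$ is not a weight vector), so the Reynolds projection $\pi:A^{\otimes n}\to R^{\otimes n}$ need not send $\mathfrak{d}_n^A$ into $\mathfrak{d}_n^R$; already $\pi\bigl(x_0^{(1)}y_0^{(1)}-x_0^{(1)}y_0^{(2)}\bigr)=x_0^{(1)}y_0^{(1)}\notin\mathfrak{d}_n^R$, so $\pi\circ\widehat\psi$ has no reason to remain $\mathfrak{d}_n$-compatible. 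Your ``$\mathfrak{d}_n$-valued correction'' idea is in fact what the paper does, but making it work is nontrivial: the paper abandons diagonal coordinates entirely, works in the original monomial basis of $S^{\otimes n}$, and builds $\widehat\varphi_e$ by a careful case analysis. On basis monomials that either violate the congruence condition \autoref{eqn.Lala} or fail $b_k-a_k\equiv 0\pmod q$ for some $k$, the value is forced easily; on the remaining monomials one must redistribute the $x$'s and $y$'s of $\psi(F^e_*\Delta_n(\cdot))$ among the tensor factors so that each factor becomes balanced, and the two combinatorial identities in \autoref{claim.mainclaim} guarantee this redistribution is always possible. That construction, not the reduction step, is where the substance lies.
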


Combined with \autoref{thm.diagFRegUSTP}, we get the following corollary: 
\begin{corollary}
  Let $R = \kay[x_0, \ldots , x_r]\# \kay[y_0, \ldots, y_s]$, and let $\mathfrak{p} \subset R$ be a prime ideal. Then $\mathfrak p^{(hn)} \subset \mathfrak p^n$ for all $n$, where $h = \dim R-1 = r+s$. 
\end{corollary}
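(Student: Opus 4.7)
The plan is to deduce the corollary directly from \autoref{thm.SegreDFR} together with \autoref{thm.MainTheorem}, using the small sharpening noted in the Remark immediately following the proof of the latter. First, \autoref{thm.SegreDFR} asserts that $R$ is diagonally $F$-regular, and then \autoref{thm.MainTheorem} yields the prime-by-prime containment $\mathfrak{p}^{(\height(\mathfrak{p})\cdot n)} \subset \mathfrak{p}^n$ for every prime $\mathfrak{p} \subset R$ and every $n \in \bN$. What remains is to package this into a uniform statement with $h = r+s$ rather than the naive $h = \dim R = r+s+1$.

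To achieve the sharper constant I would split into two cases. Let $\mathfrak{m}$ denote the irrelevant maximal ideal of $R$. For any prime $\mathfrak{p} \neq \mathfrak{m}$, the fact that $R$ is a catenary integral domain of dimension $r+s+1$ (it is the coordinate ring of the affine cone over the Segre embedding $\bP^r_\kay \times \bP^s_\kay \hookrightarrow \bP^{(r+1)(s+1)-1}_\kay$) forces $\height(\mathfrak{p}) \leq r+s = h$. Monotonicity of symbolic powers in the exponent then gives
\[
\mathfrak{p}^{(hn)} \subset \mathfrak{p}^{(\height(\mathfrak{p})\cdot n)} \subset \mathfrak{p}^n.
\]

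For $\mathfrak{p} = \mathfrak{m}$ the containment is essentially trivial: since $\mathfrak{m}$ is a maximal ideal, every power $\mathfrak{m}^N$ has radical $\mathfrak{m}$ and is therefore $\mathfrak{m}$-primary, so the symbolic and ordinary powers of $\mathfrak{m}$ coincide; as $hn \geq n$ we get $\mathfrak{m}^{(hn)} = \mathfrak{m}^{hn} \subset \mathfrak{m}^n$. I do not foresee any obstacle in executing this plan, because the substantive input is \autoref{thm.SegreDFR} itself, and the one-unit improvement from $\dim R$ to $\dim R - 1$ is exactly the observation made in the aforementioned Remark, now specialized to the graded ring $R$.
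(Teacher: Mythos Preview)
Your approach is exactly the paper's: invoke \autoref{thm.SegreDFR} to get diagonal $F$-regularity, then apply \autoref{thm.MainTheorem} together with the Remark following it (the one-unit improvement for graded rings). There is, however, a small slip in your case split. You write that any prime $\mathfrak{p} \neq \mathfrak{m}$ has height at most $r+s$, but $R$ is a finitely generated domain over a field and therefore has many maximal ideals, all of height $r+s+1$; the irrelevant ideal $\mathfrak{m}$ is not the only one. The correct dichotomy is \emph{maximal} versus \emph{non-maximal}: for every maximal ideal the symbolic and ordinary powers agree (this is what the paper's Remark actually uses), while every non-maximal prime has height $\leq r+s$ by equidimensionality. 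With that correction the argument goes through unchanged.
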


\begin{remark}
Let $\ell/\kay$ be a finitely generated field extension over a perfect field. Then
in view of \autoref{prop.Products} and \autoref{thm.SegreDFR} we have that $R_{\ell}=\ell[x_0, \ldots , x_r]\# \ell[y_0, \ldots, y_s]$ is a diagonally $F$-regular $\kay$-algebra. In particular, USTP holds for  $R_{\ell}$ as well.
\end{remark}

Combining \autoref{thm.SegreDFR} and \autoref{prop.Products}, we obtain the following observation:
\begin{corollary}
The class of diagonally $F$-regular $\kay$-algebras includes some non-isolated singularities.
\end{corollary}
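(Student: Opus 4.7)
The plan is to exhibit a concrete diagonally $F$-regular $\kay$-algebra whose singular locus is not a single point (nor a finite set of points). The two ingredients are already available: \autoref{thm.SegreDFR}, which furnishes a large stock of \emph{isolated} singularities that are diagonally $F$-regular, and \autoref{prop.prodIsDiagFReg}, which says that diagonal $F$-regularity is preserved under $\kay$-tensor products. Combining these suggests taking a Segre product and extending it by a free variable (or by another Segre product) in order to ``smear out'' the singularity along a positive-dimensional base.

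Concretely, I would fix integers $r, s \geq 1$ and set
\[
A = \kay[x_0, \ldots, x_r]\,\#\,\kay[y_0, \ldots, y_s], \qquad R = A \otimes_{\kay} \kay[t].
\]
By \autoref{thm.SegreDFR}, $A$ is diagonally $F$-regular, and the polynomial ring $\kay[t]$ is diagonally $F$-regular because it is (essentially) smooth over $\kay$. Hence \autoref{prop.prodIsDiagFReg} yields that $R$ is diagonally $F$-regular. So it only remains to verify that $\Sing R$ is not zero-dimensional. The geometric picture is that $\Spec R = \Spec A \times_{\kay} \mathbb{A}^1_{\kay}$ is a cylinder over $\Spec A$; since $\kay$ is perfect and $\kay[t]$ is smooth over $\kay$, base change gives $\Sing \Spec R = \Sing \Spec A \times \mathbb{A}^1_{\kay}$, which contains the full line $\{\text{vertex}\} \times \mathbb{A}^1_{\kay}$.

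To make the last step rigorous I would argue ring-theoretically: the vertex $\fram$ of $A$ gives a singular point of $A$ (the Segre product is not regular at its irrelevant ideal), and $R$ is faithfully flat over $\kay[t]$ with fibers isomorphic to $A$. Therefore for every closed point $\mathfrak{n} \subset \kay[t]$, the fiber ring $A \otimes_{\kay} \kappa(\mathfrak{n})$ is singular at the pullback of $\fram$, and since $\kappa(\mathfrak{n})/\kay$ is a finite (hence smooth, $\kay$ perfect) extension, this singularity persists in $R$. Consequently the preimage of $\fram$ in $R$ gives a positive-dimensional irreducible component of $\Sing R$, so $\Sing R$ is non-isolated, completing the proof.

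I don't anticipate any serious obstacle: both algebraic inputs are already established in the paper, and the only ``work'' is the routine verification that $\Sing(A\otimes_\kay \kay[t])$ has positive dimension. If one preferred a self-contained example not using the smooth factor, replacing $\kay[t]$ by a second Segre product $B$ works equally well: \autoref{prop.prodIsDiagFReg} gives $A \otimes_\kay B$ diagonally $F$-regular, and the singular locus contains $\{\fram_A\} \times \Spec B$, which is positive-dimensional since $\dim B \geq 2$.
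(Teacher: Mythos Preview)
Your proposal is correct and follows exactly the route the paper indicates: the corollary is stated as an immediate consequence of combining \autoref{thm.SegreDFR} with \autoref{prop.Products} (i.e.\ \autoref{prop.prodIsDiagFReg}), and the paper gives no further argument. Your write-up simply makes explicit the example and the routine verification that the singular locus is positive-dimensional, which the paper leaves implicit.
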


We now prove \autoref{thm.SegreDFR}. 
Observe that $R$ can be realized as the following subring of $S:=\kay[x_0, \ldots, x_r, y_0, \ldots, y_s]$:
\[
R=\kay[x_0 y_0, \ldots, x_i y_j , \ldots, x_r y_s] \subset \kay[x_0, \ldots, x_r, y_0, \ldots, y_s]=S.
\]
Fix an integer $n > 1$. We wish to show that, for all $f\in R$, there exist $e \geq 0$ and $\varphi \in \diagCA{n}_e$ such that $\varphi(F^e_* f) = 1$. We have the following lemma: 

\begin{lemma}
\label{lemma:splitAJacobianElement} Let $A$ be a $\kay$-algebra, where $\kay$ is perfect. Let $f$ be an element of $A^\circ$ such that $A_f$ is regular. Suppose also that there exist $e>0$ and $\psi \in \sD^{(n)}_e(A)$ with $\psi(F^e_* f)  =1 $. Then $A_f$ is an $F$-regular $\sD^{(n)}(A)$-module for all $n > 0$. 
\end{lemma}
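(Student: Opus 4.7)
The plan is to first identify the diagonal Cartier algebra of $A_f$ with the full Cartier algebra $\sC_{A_f}$, then invoke strong $F$-regularity of the regular ring $A_f$, and finally descend this back to the action of $\sD^{(n)}(A)$ on $A_f$ via localization.

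First I would exploit that $\kay$ is perfect. Since $A_f$ is regular and essentially of finite type over $\kay$, it is geometrically regular, and hence the $n$-fold tensor product $A_f^{\otimes n} = A_f \otimes_\kay \cdots \otimes_\kay A_f$ is again regular. By Kunz's theorem, $F^e_* A_f^{\otimes n}$ is then flat, hence projective in our $F$-finite setting, as an $A_f^{\otimes n}$-module for every $e$.

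Next, given any $\varphi \in \sC_{e, A_f}$, I would form the composition $F^e_* A_f^{\otimes n} \xrightarrow{F^e_* \Delta_n} F^e_* A_f \xrightarrow{\varphi} A_f$ and lift it through the surjection $\Delta_n : A_f^{\otimes n} \twoheadrightarrow A_f$ to some $\widehat \varphi \in \sC_{e, A_f^{\otimes n}}$; this is possible precisely by projectivity. By construction $\widehat \varphi$ is $\mathfrak{d}_n$-compatible and realizes $\varphi$ as an element of $\sD^{(n)}_e(A_f)$, so $\sD^{(n)}(A_f) = \sC_{A_f}$. Since $A_f$ is strongly $F$-regular (being regular), $\sC_{A_f}$ is $F$-regular, and hence so is $\sD^{(n)}(A_f)$. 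Finally, the formation of diagonal Cartier algebras commutes with localization at $f$ (the $\mathfrak{d}_n$-lifting property is preserved, using $\mathfrak{d}_n(A)_f = \mathfrak{d}_n(A_f)$), and $\uptau$ itself commutes with localization by Blickle--St\"abler, whence $\uptau(A_f, \sD^{(n)}(A)) = \uptau(A_f, \sD^{(n)}(A_f)) = A_f$, as required.

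The main obstacle I expect is rigorously verifying the identification $\sD^{(n)}(A)_f = \sD^{(n)}(A_f)$, since $\sD^{(n)}$ is defined via a lifting property through a surjection. One can bypass this by arguing at the level of test elements: for any $c \in A^\circ$, $F$-regularity of $A_f$ as a $\sD^{(n)}(A_f)$-module yields a map in $\sD^{(n)}_e(A_f)$ sending $F^e_* c$ to $1$, and clearing denominators produces an element of $\sD^{(n)}_e(A)$ sending $F^e_* c$ to a power of $f$, which forces a power of $f$ into $\uptau(A, \sD^{(n)}(A))$. I also note that the hypothesis $\psi(F^e_* f) = 1$ with $\psi \in \sD^{(n)}_e(A)$ does not seem to enter the proof of the stated conclusion about $A_f$; it is most likely included so that, downstream, one may apply \autoref{prop.KeyReduction} to bootstrap $F$-regularity from $A_f$ to $A$ itself.
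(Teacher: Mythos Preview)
Your approach is correct and genuinely different from the paper's. You argue that $\sD^{(n)}(A_f)=\sC_{A_f}$ by Kunz's theorem (since $A_f^{\otimes n}$ is regular over a perfect field), and then reduce the problem to the localization identity $\sD^{(n)}(A)_f=\sD^{(n)}(A_f)$. The paper instead proves an explicit splitting Claim---for every $x\in A_f^\circ$ there is $\varphi\in\sD^{(n)}(A)$ with $(\varphi/1)(F^{e'}_*x)=1$---using the hypothesis $\psi(F^e_*f)=1$ to clear powers of $f$, then deduces $F$-purity, runs a nil-isomorphism/associated-prime argument to show $\uptau$ contains a nonzerodivisor, and finally concludes $1\in\uptau$. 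Your route is shorter and, as you correctly observe, does not require the hypothesis $\psi(F^e_*f)=1$; that hypothesis enters only downstream when one combines the lemma with \autoref{prop.KeyReduction}.

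Two comments on the ``main obstacle'' you flag. First, the hard direction of $\sD^{(n)}(A)_f\supseteq\sD^{(n)}(A_f)$ can be verified directly: if $\widehat\phi\in\sC_{e,A_f^{\otimes n}}$ lifts $\phi$, write $\widehat\phi=\widehat\vartheta/w$ with $w$ a product of the elements $1\otimes\cdots\otimes f\otimes\cdots\otimes1$. Since each such element is a nonzerodivisor on $A^{\otimes n}$ (as $A^{\otimes n}$ is free over each tensor factor and $f\in A^\circ$), one has $\mathfrak d_n(A_f)\cap A^{\otimes n}=\mathfrak d_n(A)$, so $\widehat\vartheta$ is already $\mathfrak d_n(A)$-compatible and its restriction lies in $\sD^{(n)}(A)$; thus $\Delta_n(w)\cdot\phi\in\sD^{(n)}(A)_f$. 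Notably, the paper's own proof uses exactly this step (``there exists $i$ such that $f^i\vartheta\in\sD^{(n)}(A)$'') without comment, so this is not a weakness of your approach relative to theirs. Second, your proposed ``bypass'' in the last paragraph does not actually avoid the localization identity: ``clearing denominators to land in $\sD^{(n)}(A)$'' \emph{is} the hard direction. Moreover, the conclusion ``forces a power of $f$ into $\uptau(A,\sD^{(n)}(A))$'' requires knowing $\uptau$ already contains a nonzerodivisor (so that you can apply the map to that element), which you should state explicitly.
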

\begin{proof}
We want to show that $\uptau \coloneqq \uptau\bigl(A_f, \sD^{(n)}(A)\bigr) = A_f$. \emph{A priori}, $\uptau$ is an $A$-submodule of $A_f$. However, by \cite[Proposition 1.19b]{BlickleStablerFunctorialTestModules}, we know that $\uptau$ is an ideal of $A_f$. 
\begin{claim}
Let $x$ be an arbitrary element of $A_f^\circ$. Then there exist $e' > 0$ and $\vp \in \sD^{(n)}_{e'}(A)$ such that $\vp \cdot x =(\vp/1)\bigl(F^{e'}_*x\bigr) = 1$, where we used the canonical isomorphism $\Hom_{A_f}\bigl(F^{e'}_* A_f, A_f\bigr) = \Hom_A\bigl(F^{e'}_* A, A\bigr)_f$ to realize the action of $\vp$ on $x$. 
\end{claim}
\begin{proof}[Proof of claim]
As $A_f$ is regular and $\kay$ is perfect, we know that $A_f$ is diagonally $F$-regular, and so there exists $\phi\in \sD^{(n)}(A_f)$ with $\phi\bigl(F^{e'}_* x\bigr) = 1$. Further, as $\Hom_{A_f}\bigl(F^{e'}_* A_f, A_f\bigr) = \Hom_A\bigl(F^{e'}_* A, A\bigr)_f$, we can write $\phi = \vartheta/f^j$ for some $j$, where $\vartheta \in \Hom_A\bigl(F^{e'}_* A, A\bigr)$. It follows that there exists $i$ such that  $f^i \vartheta \in \sD^{(n)}(A)$. Now we have\footnote{Note that, \emph{a priori}, we only have this equation after multiplying both sides by a sufficiently large power of $f$. However, we get this equation by virtue of $f$ being a nonzerodivisor.}
\[
  f^i \vartheta \bigl( F^{e'}_* x \bigr) = f^{i+j}.
\]
By hypothesis, there exist $e>0$ and $\psi \in \sD^{(n)}_e(A)$ with $\psi(F^e_* f) = 1$. As in the proof of \autoref{prop.KeyReduction}, there exist $e''>0$ and $\psi'' \in \sD^{(n)}_{e''}(A)$ with $\psi''(F^{e''}_* f^{i+j}) = 1$. Then we get the desired map by taking $\varphi = \psi'' \cdot f^i \vartheta$. This proves the claim.
\end{proof}

By the claim, $A_f$ is an $F$-pure $\sD^{(n)}(A)$-module. By definition, we have that $H^0_\eta(\uptau_\eta) \subset H^0_\eta\bigl((A_f)_\eta\bigr)$ is a nil-isomorphism\footnote{See \cite[\S 1]{BlickleStablerFunctorialTestModules} for the definition of a nil-isomorphism.} for every associated prime $\eta \in \operatorname{Ass}_A(A_f)$, where $H^0_\eta$ denotes the local cohomology functor. This means $\uptau$ contains a nonzerodivisor of $A_f$. Indeed, if this is not the case, then $\uptau \subset \bigcup_{\eta \in \operatorname{Ass}(A_f)} \eta$, and so $\uptau \subset \eta$ for some $\eta \in \operatorname{Ass}(A_f)$ by prime avoidance. Further, we know that $\eta = \eta' A_f$ for some $\eta' \in \operatorname{Ass}_A(A_f)$. It follows that $H^0_{\eta'} (\uptau_{\eta'}) = H^0_\eta(\uptau_\eta) = \uptau_\eta$, as $\eta$ is a nilpotent ideal in $A_\eta$. Similarly, $H^0_{\eta'}\bigl((A_f)_{\eta'}\bigr) = H^0_\eta(A_\eta) = A_\eta$. As $A_f$ is $F$-pure as a $\sD^{(n)}(A)$-module, so is $A_\eta$. It follows that  $\bigl(\sD^{(n)}(A)\bigr)_+^N A_\eta = A_\eta$ for all $N> 0$, so the inclusion $H^0_{\eta'}(\uptau_{\eta'}) \subset H^0_{\eta'}\bigl((A_f)_{\eta'}\bigr)$ is not a nil-isomorphism. 

As $\uptau$ contains a nonzerodivisor, and $\uptau$ is a $\sD^{(n)}(A)$-submodule of $A_f$, it follows from the claim that $1 \in \uptau$. As $\uptau$ is an ideal of $A_f$, it follows that $\uptau = A_f$, as desired. 
\end{proof}

By \autoref{prop.KeyReduction} combined with the above lemma, to prove \autoref{thm.SegreDFR} it suffices to find an integer $e$ and a map $\varphi \in \diagCA{n}_e(R)$ with $\varphi(F^e_* x_0 y_0)=1$. It turns out that finding the correct map $\varphi$ is easy; the hard part is checking that $\varphi \in \diagCA{n}(R)$. Our strategy will be to work mostly in the polynomial ring $S$. This is possible thanks to the following lemma:
\begin{lemma} The Frobenius trace $\Phi^e \in \sC_{e,S}$ restricts to a map in $\sC_{e,R}$, \ie  $\Phi^e(F^e_* R) \subset R$, so that there is a commutative diagram
\[
\begin{tikzcd}
  F^e_*R \arrow{r}{\Phi^e} \arrow[hook]{d} & R \arrow[hook]{d}\\
  F^e_*S \arrow{r}{\Phi^e}  & S 
\end{tikzcd}
\]
\label{lemma.Phirestricts}
\end{lemma}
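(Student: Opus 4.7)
I would reduce the claim to an explicit monomial computation. Since $R$ is generated as a $\kay$-algebra by the Segre monomials $x_i y_j$, every element of $R$ is a $\kay$-linear combination of monomials $x^\alpha y^\beta \in S$ satisfying $|\alpha| = |\beta|$, where $|\cdot|$ denotes the sum of the components of the multi-index (any product of $k$ Segre generators gives $x$-degree $k$ and $y$-degree $k$, and conversely every such bi-homogeneous monomial can be written as a product of $x_iy_j$'s by pairing up factors). By the $\kay$-linearity of $\Phi^e$, the statement then reduces to verifying $\Phi^e(F^e_* x^\alpha y^\beta) \in R$ whenever $|\alpha| = |\beta|$.

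Next I would apply the explicit description of $\Phi^e$. Since $\kay$ is perfect, $F^e_* S$ is free as an $S$-module on the basis $\bigl\{F^e_* x^\gamma y^\delta : 0 \leq \gamma_i, \delta_j < q\bigr\}$, and $\Phi^e$ is the dual basis element singling out the index with all components equal to $q-1$. Writing $\alpha_i = q a_i + c_i$ and $\beta_j = q b_j + d_j$ with $c_i, d_j \in \{0, \ldots, q-1\}$, the Cartier-module identity $\Phi^e(F^e_* s^q t) = s \cdot \Phi^e(F^e_* t)$ yields
\[
\Phi^e\bigl(F^e_* x^\alpha y^\beta\bigr) \;=\; x^a y^b \cdot \Phi^e\bigl(F^e_* x^c y^d\bigr),
\]
which vanishes unless $c_i = q-1$ for all $i$ and $d_j = q-1$ for all $j$; in the nonzero case the image is simply the monomial $x^a y^b$. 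This ``local'' calculation together with the additivity reduction encapsulates essentially all the structure of $\Phi^e$ that is needed.

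The final step is to verify that $|a| = |b|$ whenever the image is nonzero, which is exactly the membership criterion for $x^a y^b \in R$. Summing the defining relations $\alpha_i = q a_i + (q-1)$ over $i$ and $\beta_j = q b_j + (q-1)$ over $j$ yields the bookkeeping identities $|\alpha| = q|a| + (r+1)(q-1)$ and $|\beta| = q|b| + (s+1)(q-1)$, which combined with $|\alpha| = |\beta|$ control the discrepancy $|a| - |b|$. The chief obstacle will be carrying out this last degree comparison uniformly: it must reconcile the modular constraints forcing $\Phi^e$ to be nonzero with the bi-degree matching defining $R$, and some extra care will be required when $r \neq s$ to ensure the residual discrepancy truly vanishes on the monomials that actually arise.
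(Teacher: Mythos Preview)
Your plan is essentially identical to the paper's proof: reduce to monomials, apply the explicit formula for $\Phi^e$ via Euclidean division of exponents, and compare the total $x$- and $y$-degrees of the output. The paper carries out exactly this computation and arrives at the same identity you isolate, namely $q(|a|-|b|) = (s-r)(q-1)$ (in the paper's notation, $(\mu - \nu)q = \beta - \alpha$).

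Your instinct that the case $r \neq s$ needs extra care is correct---in fact more seriously so than either you or the paper allow for. The paper writes ``in particular, if $\alpha_i, \beta_j = q-1$ then $\mu = \nu$'', but that inference is only valid when $r = s$. When $r \neq s$ and $q \mid (s-r)$, the residual discrepancy does \emph{not} vanish on the monomials that arise, and the lemma as stated is actually false. For instance, take $r = 1$, $s = 3$, $p = 2$, $e = 1$: the monomial $x_0 x_1^3 y_0 y_1 y_2 y_3 = (x_0 y_0)(x_1 y_1)(x_1 y_2)(x_1 y_3)$ lies in $R$, yet $\Phi^1(F_* x_0 x_1^3 y_0 y_1 y_2 y_3) = x_1 \notin R$. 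So the ``extra care'' you anticipate cannot succeed in full generality.

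This does not damage the paper's application, because the lemma is only ever invoked for $e \gg 0$ (the subsequent lemma on generators of $F^e_*R$ explicitly assumes $q > \max\{r+1, s+1\}$). Once $q > |s-r|$, the relation $q(|a|-|b|) = (s-r)(q-1)$ together with $\gcd(q, q-1)=1$ forces $q \mid (s-r)$, hence either $s = r$ or no monomial of $R$ can have all residues equal to $q-1$; in either case $\Phi^e(F^e_* R) \subset R$ as desired. So your plan goes through verbatim once you add the standing hypothesis $q > |s-r|$, and that is all the paper actually uses.
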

\begin{proof}
Let $x_0^{a_0} \cdots x_r^{a_r} \cdot y_0^{b_0} \cdots y_s^{b_s}$ be a monomial in $R$, meaning 
\begin{equation} \label{eqn.MonomialR}
 \sum_{i=0}^r a_i =  \sum_{i=0}^s b_i.
\end{equation}
For convenience, we will use the notation 
\[
  \exes a \coloneqq x_0^{a_0} \cdots x_r^{a_r}, \quad \whys b  \coloneqq y_0^{b_0} \cdots y_s^{b_s}.
\]
Write using the Eucliden algorithm,
\begin{equation} \label{eqn.DecompostionA}
a_i \eqqcolon \mu_i q + \alpha_i, \quad {0 \leq \alpha_i \leq q-1}.
\end{equation}
Similarly,
\begin{equation} \label{eqn.DecompostionB}
b_i \eqqcolon \nu_i q + \beta_i, \quad {0 \leq \beta_i \leq q-1}.
\end{equation}
In such a way that,
\[
F^e_* \exes{a} \whys{b} = \exes{\smash{\mu}} \whys{\nu} \cdot F^e_* \exes{\alpha} \whys{ \smash \beta}.
\]
Therefore,
\[
\Phi^e \big( F^e_* \exes{a} \whys{b} \big) = \begin{cases}
\exes{\smash \mu} \whys{\nu}  & \text{if } \alpha_i, \beta_i = q-1,\\
\hfil 0 & \text{otherwise}.
\end{cases}
\]
Now, combining \autoref{eqn.MonomialR}, \autoref{eqn.DecompostionA} and \autoref{eqn.DecompostionB} we get
\begin{equation} \label{eqn.KeyEquation}
\left( \sum \mu_i \right) q + \sum \alpha_i = \left( \sum \nu_i\right) q + \sum \beta_i,
\end{equation}
Introducing the notation $\mu \coloneqq \sum_i \mu_i$ etcetera, we conclude that $\mu= \nu$ if and only if $\alpha = \beta$. In particular, if $\alpha_i ,\beta_i = q-1$ then $\mu = \nu$, meaning that
\[
 \exes{\smash \mu} \whys{\nu}  \in R,
\]
as desired. This proves the lemma.
\end{proof}

Continuing with the proof of \autoref{thm.SegreDFR}, consider the map 
\[\varphi_e:=\Phi^e \cdot x_0^{q-2} x_1^{q-1} \cdots x_r^{q-1} y_0^{q-2} y_1^{q-1} \cdots y_s^{q-1} \in \sC_e^S .\] 
Since 
\[
x_0^{q-2} x_1^{q-1} \cdots x_r^{q-1} y_0^{q-2} y_1^{q-1} \cdots y_s^{q-1} \in R,
\]
we have that $\varphi_e$ also restricts to a map in $\sC_e^R$. Moreover,
\[
\varphi_e(F^e_* x_0 y_0) = \Phi^e \Big(F^e_* x_0^{q-1} \cdots x_r^{q-1} y_0^{q-1} \cdots y_s^{q-1}\Big) = 1.
\]
Hence, it suffices to prove that $\varphi_e \in \sD^{(n)}(R)$  for $e$ large enough. Our strategy will be to show the following.
\begin{claim} \label{claim.main}
There exists a lifting of $\varphi_e \in \sC_{e,S}$ to $\sC_{e, S^{\otimes n}}$, say
\[
\xymatrix{
F_*^e S^{\otimes n} \ar[r]^-{\widehat{\varphi}_e} \ar[d]_-{F^e_* \Delta_n} & S^{\otimes n}\ar[d]^-{\Delta_n}\\ 
 F_*^e S \ar[r]^-{\varphi_e} & S
}
\]
such that $\widehat{\varphi}_e$ restricts to $R^{\otimes n}$, \ie $\widehat{\varphi}_e \big(R^{\otimes n}\big) \subset R^{\otimes n}$, for $e \gg 0$.
\end{claim}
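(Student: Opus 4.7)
The plan is to realize $\widehat\varphi_e = \Phi^e_{S^{\otimes n}} \cdot h_e$ for some $h_e \in S^{\otimes n}$ chosen to simultaneously be a lift of $\varphi_e$ and a preserver of $R^{\otimes n}$. Since $S^{\otimes n}$ is a polynomial ring over $\kay$, every element of $\sC_{e, S^{\otimes n}}$ has this form. First, I would pass to the adapted coordinates $u_i^{(k)} := x_i^{(k)} - x_i^{(1)}$ and $v_j^{(k)} := y_j^{(k)} - y_j^{(1)}$ for $k = 2, \dots, n$, so that $S^{\otimes n} = T[u, v]$ with $T = \kay[x_i^{(1)}, y_j^{(1)}] \cong S$ and $\mathfrak{d}_n = (u_i^{(k)}, v_j^{(k)})$. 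A direct computation with the Frobenius trace on $T[u, v]$ over $T$ yields a Fedder-type criterion: writing $h = \sum_{\gamma, \delta} h_{\gamma, \delta}\, u^\gamma v^\delta$ with $h_{\gamma, \delta} \in T$, the map $\Phi^e_{S^{\otimes n}} \cdot h$ lifts $\varphi_e = \Phi^e_S \cdot M_e$ if and only if $h_{(q-1, \dots, q-1),\, (q-1, \dots, q-1)} = M_e$ and $h_{\gamma, \delta} = 0$ for all other $(\gamma, \delta)$ whose components lie in $[0, q-1]$. The canonical lift $h^{\mathrm{can}} := M_e \cdot \prod_{k \geq 2, i}(u_i^{(k)})^{q-1} \prod_{k \geq 2, j}(v_j^{(k)})^{q-1}$ satisfies this trivially.

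Second, I would impose a multi-homogeneity condition on $h_e$ using the natural $\mathbb{Z}^n$-grading on $S^{\otimes n}$ defined by $\deg(x_i^{(k)}) = \vec{e}_k$ and $\deg(y_j^{(k)}) = -\vec{e}_k$, under which $R^{\otimes n}$ is exactly the multi-degree-zero subring. A direct degree count shows that $\Phi^e_{S^{\otimes n}}$ sends a multi-homogeneous element of multi-degree $\vec{d}$ to one of multi-degree $(\vec{d} - (r-s)(q-1)\vec{1})/q$, when nonzero. Therefore, if $h_e$ is multi-homogeneous of multi-degree $(r-s)(q-1)\vec{1}$, then for every $f \in R^{\otimes n}$ the image $\widehat\varphi_e(F^e_* f) = \Phi^e_{S^{\otimes n}}(F^e_*(h_e f))$ has multi-degree $\vec{0}$ and so lies in $R^{\otimes n}$.

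Finally, I would take $h_e$ to be the multi-homogeneous component of $h^{\mathrm{can}}$ of multi-degree $(r-s)(q-1)\vec{1}$. Expanding each binomial $(x_i^{(k)} - x_i^{(1)})^{q-1}$ and selecting those terms whose expansion indices $(\vec{\alpha}, \vec{\beta})$ satisfy the per-copy balance condition $\sum_i \alpha_i^{(k)} = \sum_j \beta_j^{(k)}$ for each $k \geq 2$ yields an explicit multi-homogeneous $h_e$. The leading coefficient $h_{e,\,(q-1,\dots,q-1),\,(q-1,\dots,q-1)} = M_e$ is preserved because $\vec{\alpha} = \vec{\beta} = 0$ trivially satisfies the balance condition. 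The main obstacle is to check that the remaining ``vanishing'' coefficient conditions survive the projection: specifically, that $h_{e, \gamma, \delta} = 0$ for all $(\gamma, \delta) \in [0, q-1]^{\bullet}$ other than the top-degree one. This should reduce to a multinomial identity expressing the cancellations enforced by the balance condition together with Lucas's theorem for $\binom{q-1}{\ast}$ modulo $p$; the hypothesis $e \gg 0$ is expected to enter here by providing enough high-order expansion terms to achieve the required cancellations.
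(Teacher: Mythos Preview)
Your Fedder-type criterion and your multi-homogeneity observation are each correct, but the two constraints you impose on $h_e$ are mutually incompatible --- for every $e$. Take $n=2$ and consider the basis element $F^e_*\bigl(x_0^{(1)} y_0^{(2)}\bigr)$ of $F^e_* S^{\otimes 2}$. One has
\[
\varphi_e\bigl(F^e_*\Delta_2(x_0^{(1)} y_0^{(2)})\bigr)=\varphi_e(F^e_* x_0 y_0)=\Phi^e\bigl(F^e_* x_0^{q-1}\cdots x_r^{q-1}y_0^{q-1}\cdots y_s^{q-1}\bigr)=1.
\]
But $x_0^{(1)} y_0^{(2)}$ has multi-degree $(1,-1)$ in your $\mathbb Z^2$-grading, so if $h_e$ is multi-homogeneous of multi-degree $(r-s)(q-1)(1,1)$ then by your own degree computation $\widehat\varphi_e=\Phi^e_{S^{\otimes 2}}\cdot h_e$ would have to send this element into multi-degree $(1/q,-1/q)$, which forces the image to be $0$. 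Thus $\Delta_2\circ\widehat\varphi_e$ and $\varphi_e\circ F^e_*\Delta_2$ already disagree on this element, and \emph{no} multi-homogeneous $h_e$ of the required degree can produce a lift of $\varphi_e$. The ``main obstacle'' you flag is therefore not a technicality but a genuine obstruction: the hoped-for multinomial cancellations on the lower $(\gamma,\delta)$-coefficients of the projected $h^{\mathrm{can}}$ simply do not occur, and enlarging $e$ does not help.

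The paper avoids this by abandoning multi-homogeneity altogether and defining $\widehat\varphi_e$ directly on the monomial basis of $F^e_* S^{\otimes n}$. It first shows that $F^e_* R^{\otimes n}$ lies in the free $S^{\otimes n}$-summand spanned by those basis monomials with $b_k-a_k\equiv 0\pmod q$ for every $k$. On basis elements \emph{outside} that summand --- such as $F^e_*(x_0^{(1)} y_0^{(2)})$ above --- there is no $R^{\otimes n}$-constraint whatsoever, so the paper is free to set $\widehat\varphi_e$ equal to $\psi(\,\cdot\,)\otimes 1\otimes\cdots\otimes 1$ there, which is exactly what is needed for the lifting but is not multi-homogeneous. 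On basis elements \emph{inside} the summand, the paper distributes the factors of the monomial $\psi(\,\cdot\,)\in S$ among the $n$ tensor slots by hand, giving the $k$-th slot a monomial with exactly $|b_k-a_k|/q$ more $y$'s than $x$'s (or the reverse when $a_k>b_k$); a short counting lemma guarantees there are always enough $x$'s and $y$'s to do this and that the leftover factor is balanced, hence lies in $R$.
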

It suffices to show this claim, for then the restriction of $\widehat{\varphi}_e$ to $F^e_* R^{\otimes n}$ will be a lifting of $\varphi_e \: F^e_* R \to R$. We are going to spend the rest of the section proving \autoref{claim.main}. For this, we use the following notation,
\[
S^{\otimes n}=\kay[ \bm x_1, \bm y_1, \ldots, \bm x_n, \bm y_n],
\]
where
\[ 
  \bm x_k \coloneqq x_{0, k}, x_{1,k}, \ldots, x_{r, k}
\]
and similarly for $\bm y_k$, where the second subscript of $x_{i,k}$ (resp. $y_{j,k}$) denotes which copy of the $n$-fold tensor product it corresponds to. We also write
\[
R^{\otimes n}=\kay \left[ x_{i, k} y_{j, k} \, \Bigg| %
\begin{array}{l} %
1 \leq i \leq r,\\ %
0 \leq j \leq s,\\ %
0 \leq k \leq n %
\end{array} %
 \right]
\]
so that a monomial
\[
\prod_{k=1}^n \exesi{a}{k} \whysi{b}{k} \in S^{\otimes n}
\]
belongs to $R^{\otimes n}$ if and only if 
\[
 a_k = b_k
\]
for all $k$, where we use the notation
\[
  a_k \coloneqq \sum_{i=0}^r a_{i, k}
\text{\quad and \quad}
  \exesi{a}{k} \coloneqq \prod_{i=0}^r x_{i, k}^{a_{i, k}}
\]
and similarly for $b_k$ and $\whysi{b}{k}$. To be clear, the second subscript always denotes which factor of the $n$-fold tensor product we are working in. 

Recall that
\[
F^e_*\prod_{k=1}^n \exesi{a}{k} \whysi{b}{k}, \quad 0 \leq a_{i,k}, b_{j,k} \leq  q-1
\]
is a (free) basis of $F_*^e S^{\otimes n}$ as an $S^{\otimes n}$-module.  We will construct the map $\widehat{\varphi}_e$ from \autoref{claim.main} explicitly by assigning values for $\widehat{\varphi}_e$ at each of these basis elements, pursuant to the two conditions:
\begin{enumerate}
\item $\Delta_n \circ \widehat{\varphi}_e = \varphi_e \circ F^e_* \Delta_n$, and 
\item $\widehat{\varphi}_e \big(F^e_* R^{\otimes n}\big) \subset R^{\otimes n}$.
\end{enumerate}
For some basis elements, it is easy to figure out where we can send them. For others, it is a more delicate question. We begin by taking care of the easy ones. 

As we will see, one case when it is easy is when our basis element is in the kernel of $\varphi_e \circ F^e_* \Delta_n$. Let $\psi:= \varphi_e \circ F^e_* \Delta_n$. Then we have
\begin{align*}
&\psi \Bigg( F^e_*\prod_{k=1}^n \exesi{a}{k} \whysi{b}{k} \Bigg) \\
=& \varphi_e \Big( F^e_* \exestothe{\sum_k a_{\bullet, k}} \whystothe{\sum_k b_{\bullet, k}} \Big)\\
=& \Phi^e \Big( F^e_* x_0^{q-2+\sum_k a_{0, k}}x_1^{q-1+\sum_k a_{1, k}} \cdots x_r^{q-1+\sum_k a_{r, k}} \cdot y_0^{q-2+\sum_k b_{0, k}} y_1^{q-1+\sum_k b_{1, k}} \cdots y_s^{q-1+\sum_k b_{s, k}}  \Big).
\end{align*}
This will be nonzero precisely when 
\begin{align} \label{eqn.Lala} \tag{\Cat}
\begin{split}
\sumofas{0}, \sumofbs{0} & \equiv 1 \mod q, \\
\sumofas{i}, \sumofbs{j} &\equiv 0 \mod q, \textrm{ where } 1 \leq i \leq r, 1 \leq j \leq s. 
\end{split}
\end{align}
Let $\upsilon(x) = \floor{x/q}$. Hence, in case \autoref{eqn.Lala} we have
\begin{align*}
&\Phi^e \Big( F^e_* x_0^{q-2+\sumofas{0}}x_1^{q-1+\sumofas{1}} \cdots x_{r\phantom{1}}^{q-1+\sum_k a_{r, k}} \cdot y_0^{q-2+\sum_k b_{0, k}} y_1^{q-1+\sum_k b_{1, k}} \cdots y_{s\phantom{1}}^{q-1+\sum_k b_{s, k}}  \Big)\\ = &\exestothe{\upsilon\left(\sumofas{\bullet} \right)} \whystothe{\upsilon\left(\sumofbs{\bullet} \right)}. 
\end{align*}
In summary,
\begin{equation*}
\psi \left( F^e_*\prod_{k=1}^n \exesi{a}{k} \whysi{b}{k} \right)
= \begin{cases}
\exestothe{\upsilon\left(\sumofas{\bullet} \right)} \whystothe{\upsilon\left(\sumofbs{\bullet} \right)}  & \textrm{if  condition \autoref{eqn.Lala} holds,}\\
\hfil 0 & \text{otherwise.}
\end{cases}
\end{equation*}
If condition \autoref{eqn.Lala} does not hold, we set 
\[
\widehat{\varphi}_e \left( F^e_*\prod_{k=1}^n \exesi{a}{k} \whysi{b}{k} \right) =0 \in R^{\otimes n}.
\]

The next case that is easy to deal with is the case where our generator of $F^e_* S^{\otimes n}$ has nothing to do with $F^e_* R^{\otimes n}$. More precisely, if we have
\[
 \left( S^{\otimes n} F^e_* \prod_{k=1}^n \exesi{a}{k} \whysi{b}{k} \right) \cap F^e_* R^{\otimes n} = 0
\]
then the value we assign to
\[
\widehat{\varphi}_e \left( F^e_*\prod_{k=1}^n \exesi{a}{k} \whysi{b}{k} \right)
\]
has no bearing on whether $\widehat{\varphi}_e(F^e_* R^{\otimes n}) \subset R^{\otimes n}$. So for these generators we only need to worry about the requirement that $\Delta_n \circ \widehat{\varphi}_e = \varphi_e \circ F^e_* \Delta_n$. We deduce which generators satisfy this condition in the following lemma. 

\begin{lemma} \label{rem.generators}
 $F^e_*R$ is generated as an $R$-submodule of $F^e_* S$ by the  elements
\begin{equation*}
\exes{\mu} \cdot F^e_* \exes{\alpha} \whys{\beta}, \quad{0\leq \alpha_i, \beta_j \leq q-1}
\end{equation*}
such that $sq \geq \mu q = \beta - \alpha \geq 0$, along with the elements 
\begin{equation*}
\whys{\nu} \cdot F^e_* \exes{\alpha} \whys{\beta}, \quad{0\leq \alpha_i, \beta_j \leq q-1}
\end{equation*}
such that $rq \geq \nu q = \alpha - \beta \geq 0$.
Moreover, $F^e_* R^{\otimes n}$ is generated as an $R^{\otimes n}$-module by tensor products of these generators. Here, we are still using the notation $\mu = \sum_{i=0}^r \mu_i$ and $\nu = \sum_{j=0}^s \nu_j$, and similarly for $\alpha$ and $\beta$.  

In particular, the ring $F^e_* R^{\otimes n}$ is contained in the direct summand of $F^e_* S^{\otimes n}$ generated  as a (free) $S^{\otimes n}$-module by monomials of the form
\[
  F^e_* \prod_k \exesi{a}{k} \whysi{b}{k}
\]
such that $ b_k - a_k \equiv 0 \mod q$ for all $k$, $1 \leq k \leq n$. Here, we are still using the notation $b_k \coloneqq \sum_{j=0}^s b_{j, k}$ and $a_k \coloneqq \sum_{i=0}^r a_{i, k}$.
\end{lemma}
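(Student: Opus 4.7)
My plan is to handle the three claims in sequence, each reducing to a direct computation on monomial generators.

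For the first claim, I would start with an arbitrary monomial $\exes{a}\whys{b} \in R$, which by definition of $R$ satisfies $a = b$ (where $a = \sum_i a_i$ and $b = \sum_j b_j$). Applying the Euclidean algorithm as in the previous lemma, I write $a_i = \mu_i q + \alpha_i$ and $b_j = \nu_j q + \beta_j$ with $0 \leq \alpha_i, \beta_j \leq q-1$, so that
\[
F^e_* \exes{a}\whys{b} = \exes{\mu}\whys{\nu} \cdot F^e_* \exes{\alpha}\whys{\beta}.
\]
The equality $a=b$ yields $(\mu-\nu)q = \beta - \alpha$. I then split into two cases: if $\mu \geq \nu$, I factor $\exes{\mu}\whys{\nu} = (\exes{\nu}\whys{\nu}) \cdot \exes{\mu - \nu}$, noting that $\exes{\nu}\whys{\nu} \in R$ because it has matching total $x$- and $y$-degrees. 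The residual factor $\exes{\mu - \nu} \cdot F^e_* \exes{\alpha}\whys{\beta}$ is exactly a generator of the first type: we have $(\mu-\nu)q = \beta - \alpha \geq 0$, and since $\beta \leq (s+1)(q-1) < (s+1)q$, the integer $\mu - \nu$ satisfies $\mu - \nu \leq s$, giving $(\mu - \nu)q \leq sq$. The symmetric case $\nu \geq \mu$ gives a generator of the second type. This shows every monomial in $F^e_* R$ lies in the $R$-span of the listed generators, and by $\kay$-linearity (using $\kay \subset R$) the same holds for arbitrary elements of $F^e_* R$. Conversely, each listed generator $\exes{\mu}\cdot F^e_*\exes{\alpha}\whys{\beta}$ equals $F^e_*\exes{\mu q + \alpha}\whys{\beta}$, which is in $F^e_*R$ since $\mu q + \alpha = \beta$ by the defining condition.

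For the second claim, I would observe that $R^{\otimes n}$ has a $\kay$-basis consisting of tensor products $\prod_k m_k$ where each $m_k$ is a monomial of $R$. Applying the previous reduction factor by factor, each $F^e_* \prod_k m_k$ becomes an $R^{\otimes n}$-multiple of a tensor product of the listed generators, which gives the required $R^{\otimes n}$-module generating set.

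For the final direct-summand statement, I would note that the free $S^{\otimes n}$-submodule $M$ of $F^e_*S^{\otimes n}$ spanned by the specified standard basis elements (those with $b_k - a_k \equiv 0 \mod q$) is indeed a direct summand because it is cut out by a subset of the free basis. A tensor product of listed generators has the form $\bigotimes_k \gamma_k \cdot F^e_* \exesi{\alpha}{k}\whysi{\beta}{k}$, where in each factor either $\beta_k - \alpha_k = \mu_k q$ (type 1) or $\alpha_k - \beta_k = \nu_k q$ (type 2); either way $\beta_k - \alpha_k \equiv 0 \mod q$. Since the coefficient $\gamma_k \in S$ multiplies the standard basis element without altering its exponents $\alpha_k, \beta_k$, the product lies in $M$; subsequent multiplication by any element of $R^{\otimes n} \subset S^{\otimes n}$ preserves membership in $M$. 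Combining with the second claim gives $F^e_* R^{\otimes n} \subseteq M$.

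The main subtlety, and where I would double-check carefully, is the bound $(\mu - \nu)q \leq sq$ in the first case. This is not immediate from the crude bound $\beta \leq (s+1)(q-1)$ but instead relies on the integrality of $\mu - \nu$ together with the strict inequality $\beta - \alpha < (s+1)q$, forcing $\mu - \nu \leq s$; the symmetric bound for the second type uses that $\alpha$ is a sum of only $r+1$ terms. Everything else is bookkeeping with the $F$-module structure on a polynomial ring.
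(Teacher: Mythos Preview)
Your proposal is correct and follows essentially the same approach as the paper: decompose each monomial of $R$ via the Euclidean algorithm, use the equation $(\mu-\nu)q=\beta-\alpha$ to split into cases, and factor off a monomial in $R$ from $\exes{\mu}\whys{\nu}$. Your derivation of the bound $\mu-\nu\leq s$ via integrality and the strict inequality $\beta-\alpha<(s+1)q$ is the same idea the paper uses (phrased there as intersecting with $q\bZ$), and you are more thorough than the paper in spelling out the tensor-product and direct-summand claims and in checking the converse containment.
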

\begin{proof}
We observed in the proof of \autoref{lemma.Phirestricts} that elements in $F_*^e R$ are $\kay$-linear combinations of elements of the form
\[
\exes{\mu} \whys{\nu} \cdot F^e_* \exes{\alpha} \whys{\beta}, \quad 0\leq \alpha_i, \beta_j \leq q-1
\]
such that
\[
\mu q + \alpha = \nu q + \beta,
\]
 equivalently,
 \begin{equation} \label{eqn.KeyEqnAgain}
 (\mu-\nu)q=\beta - \alpha.
 \end{equation}
In particular, $\mu - \nu$ and $\beta - \alpha$ have both the same sign (including zero). Note that 
\[
  \beta - \alpha \in \{-(r+1)(q-1), - (r+1)(q-1)+1, \ldots ,-1, 0, 1, \ldots, (s+1)(q-1)\}
  \]
  and $(\mu-\nu)q \in q \bZ $. We see that the intersection of these two sets is $\{-rq, -rq+1, \ldots, sq\}$, assuming $q > \max\{r+1, s+1\}$. Therefore for \autoref{eqn.KeyEqnAgain} to hold there are three possibilities: if $\mu - \nu = 0$, then both monomials $\exes{\mu} \whys{\nu}$ and $\exes{\alpha} \whys{\beta}$ are in $R$. Otherwise, if $\mu - \nu >0 $ (respectively, $\mu - \nu < 0$), then the monomial $\exes{\mu} \whys{\nu}$ can be factored as a product of a monomial in $R$ times a monomial $\exes{\mu'}$ (respectively,  $\whys{\nu'} $) with $\mu' = \mu - \nu$ (respectively, $\nu' = \nu - \mu$). 
This proves the lemma.
\end{proof}

The above being said, we proceed as follows. If we have  $ b_k - a_k \not \equiv 0 \mod q$ for some $1 \leq k \leq n$, we set
\[
 \widehat{\varphi}_e \left( F^e_* \prod_k \exesi{a}{k} \whysi{b}{k}\right) =\psi \Big( F^e_* \prod_k \exesi{a}{k} \whysi{b}{k}\Big) \otimes 1 \otimes \cdots \otimes 1 .
\]
Note that this is consistent with our earlier assignment, even if condition \autoref{eqn.Lala} does not hold.

Now we come to the hard part of this proof. We are given a monomial that satisfies condition \autoref{eqn.Lala} and also satisfies $b_k - a_k \equiv 0 \mod q$ for all $k$ and we need to figure out where $\widehat \vp_e$ should send it to. Our idea is quite simple, though it might be lost in the cumbersome notation. Thus it makes sense to do an example first. 

\begin{example} 
Say $p=5, e=1, n = 2,$ and $r = s = 1$. Let $F_* g \coloneqq F_* x_{0, 1} x_{1, 1} y_{0, 1}^{3} y_{1, 1}^4 \otimes x_{1, 2}^4 y_{0, 2}^3 y_{1, 2}$ be the generator in question. To figure out where we should send this generator, we first compute $\varphi_1 \circ F_* \Delta_2(F_* g)$: 
\[
\varphi_1 \circ F_* \Delta_2\left(F_* x_{0, 1} x_{1, 1} y_{0, 1}^{3} y_{1, 1}^4 \otimes x_{1, 2}^4 y_{0, 2}^3 y_{1, 2}\right) = \varphi_e\left(x_0 x_1^5 y_0^6 y_1^5\right) = x_1 y_0 y_1
\]
Now, $F_* g \not \in F_* R^{\otimes 2}$, as $x_0x_1y_0^3 y_1^4 \not \in R$, but there are certainly many $S$-multiples of $F_* g$ that land in $F_* R^{\otimes 2}$. Wherever we send $F_* g$, we need to make sure that these $S$-multiples get sent to  $R^{\otimes 2}$. 

Luckily, as described in \autoref{rem.generators}, the multiples of $F_* g$ that appear in $F_* R^{\otimes 2}$ have a very precise form. The point is that the monomial $F^e_* x_{0, 1} x_{1, 1} y_{0, 1}^{3} y_{1, 1}^4$ has a surplus of 5 more $y$'s than $x$'s. To multiply this monomial into $F^e_* R$, we must balance this out by multiplying by one more $x$ relative to the number of $y$'s (which becomes a surplus of 5 more $x$'s than $y$'s once we move them across the $F_*$). 

So for instance, $x_{0,1} \otimes 1 \cdot F_* x_{0, 1} x_{1, 1} y_{0, 1}^{3} y_{1, 1}^4 \otimes x_{1, 2}^4 y_{0, 2}^3 y_{1, 2} \in F_* R^{\otimes 2}$. This means that, if we set
\[
  \widehat{\varphi}_e\left(F_* x_{0, 1} x_{1, 1} y_{0, 1}^{3} y_{1, 1}^4 \otimes x_{1, 2}^4 y_{0, 2}^3 y_{1, 2}\right) = x_{0, 1}^{c_{0,1}} x_{1, 1}^{c_{1,1}} y_{0, 1}^{d_{0, 1}} y_{1, 1}^{d_{1,1}} \otimes x_{0, 2}^{c_{0,2}} x_{1, 2}^{c_{1,2}} y_{0, 2}^{d_{0, 2}} y_{1, 2}^{d_{1,2}}
\]
we must have 
\[
  1 + c_{0,1} + c_{1, 1} = d_{0,1} + d_{1, 1}, \textrm{ and } c_{0,2} + c_{1, 2} = d_{0,2} + d_{1, 2}.
\]
In other words, $\widehat{\varphi}_1 (F_* g)$ needs to have one more $y$ than it does $x$'s in the first tensor factor and the same number of $x$'s and $y$'s in the second tensor factor. We do this by ``taking'' one of the $y$'s from the product $x_1 y_0 y_1$ (it does not matter which) and ``giving'' it to the first tensor factor of $\widehat{\varphi}_1(F_* g)$. For instance, we can set the first tensor factor of $\widehat{\varphi}_1(F_* g)$ to be $y_0$. Then we give the rest of the product $x_1 y_0 y_1$ to the second tensor factor. At the end of the day we have
\[
  \widehat{\varphi}_1(F_* g) = y_{0,1} \otimes x_{1,2} y_{1,2}
\]
and we see that $x_{0,1}\widehat{\varphi}_1(F_* g) \in R^{\otimes 2}$ and $\Delta_2 \circ \widehat{\varphi}_1(F_* g) = x_1y_0y_1$, as desired. $\qed$

\end{example}

In what follows, we use the same technique as in the above example, but in a more general setting. We go through each tensor factor of the generator $F^e_* g$ and we ask: does it have more $y$'s than $x$'s? If so, we take the correct number of $y$'s from $\varphi_e \circ F^e_* \Delta_n(F^e_* g)$ and give them to the corresponding tensor factor of $\widehat{\varphi}_e(F^e_* g)$. Similarly, if that tensor factor of $F^e_* g$ has more $x$'s, we take the correct number of $x$'s from $\varphi_e \circ F^e_* \Delta_n(F^e_* g)$ and give them to the corresponding tensor factor of $\widehat{\varphi}_e(F^e_* g)$. The fact that $\varphi_e \circ F^e_* \Delta_n(F^e_* g)$ will always have enough $x$'s and $y$'s to do this process is expressed by \autoref{eqn.enoughVars}. The fact that, after removing these $x$'s and $y$'s, whatever is left of $\varphi_e \circ F^e_* \Delta_n(F^e_* g)$ will be an element of $R$ is expressed by \autoref{eqn.thetaisbalanced}. We can then tack on these left-overs to any tensor factor of $\widehat{\varphi}_e(F^e_* g)$ to ensure that we have $\Delta_n \circ \widehat{\varphi}_e(F^e_* g) =\varphi_e \circ F^e_* \Delta_n(F^e_* g)$. 

Recall that $\upsilon(x) = \lfloor x/q \rfloor$.

\begin{lemma} \label{claim.mainclaim}
 Let $F^e_* \prod_k \exesi{a}{k} \whysi{b}{k}$ be an $S^{\otimes n}$-module generator of $F^e_* S^{\otimes n}$ satisfying condition \autoref{eqn.Lala}, and suppose $b_k - a_k \equiv 0 \mod q$ for all $k$ with $1 \leq k \leq n$. Then
  \begin{equation}
  \label{eqn.thetaisbalanced}
   \sum_{k=1}^n b_k - a_k = q \left(\sum_{j=0}^s \upsilon\left( \sum_{k=1}^n b_{j, k} \right) - \sum_{i=0}^r \upsilon\left( \sum_{k=1}^n a_{i, k} \right) \right). 
  \end{equation}
  Moreover, setting 
  \[
    (\mu_{+,k}, \nu_{+,k}) = \begin{cases}
      \big((b_k - a_k)/q, 0 \big), & b_k - a_k \geq 0, \\
      \big(0, (a_k-b_k)/q\big), &   b_k - a_k < 0
    \end{cases}
  \]
  we have
  \begin{equation}
  \sum_{j=0}^s \upsilon\left( \sum_{k=1}^n b_{j, k} \right) \geq \sum_{k=1}^n \mu_{+,k} \eqqcolon \mu_+, \quad \sum_{i=0}^r \upsilon\left( \sum_{k=1}^n a_{i, k} \right) \geq \sum_{k=1}^n \nu_{+,k} \eqqcolon \nu_+
  \label{eqn.enoughVars}.
  \end{equation}
\end{lemma}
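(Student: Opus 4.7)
The plan is to dispatch the identity \eqref{eqn.thetaisbalanced} first and then reduce the two inequalities in \eqref{eqn.enoughVars} to a single combinatorial fact about the $a_k$'s and $b_k$'s.

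For \eqref{eqn.thetaisbalanced}, first I would use condition \eqref{eqn.Lala} in the following mundane form: since each $\sum_k a_{i,k}$ and $\sum_k b_{j,k}$ is a nonnegative integer, the congruence conditions say exactly that
\[
\sum_{k=1}^n a_{0,k} = q\,\upsilon\!\Bigl(\textstyle\sum_k a_{0,k}\Bigr) + 1, \qquad \sum_{k=1}^n a_{i,k} = q\,\upsilon\!\Bigl(\textstyle\sum_k a_{i,k}\Bigr) \text{ for } i \geq 1,
\]
and similarly for the $b_{j,k}$'s. Summing over $i$ (respectively $j$) gives
\[
\sum_{k=1}^n a_k = q\sum_{i=0}^r \upsilon\!\Bigl(\textstyle\sum_k a_{i,k}\Bigr) + 1, \qquad \sum_{k=1}^n b_k = q\sum_{j=0}^s \upsilon\!\Bigl(\textstyle\sum_k b_{j,k}\Bigr) + 1,
\]
and subtracting yields \eqref{eqn.thetaisbalanced}.

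For \eqref{eqn.enoughVars}, the key observation is that $\max(0, b_k-a_k) + \min(a_k, b_k) = b_k$ for each $k$, so $q\mu_+ + \sum_k \min(a_k,b_k) = \sum_k b_k$. Combined with the second identity from the previous paragraph, the inequality $\sum_j \upsilon(\sum_k b_{j,k}) \geq \mu_+$ is equivalent to $\sum_k \min(a_k,b_k) \geq 1$. Since this quantity is symmetric in $a$ and $b$, the analogous equivalence holds for the $\nu_+$ inequality, so both inequalities reduce to the single claim that $\sum_k \min(a_k, b_k) \geq 1$.

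The main obstacle, and the one interesting point, is proving this last claim. I would argue by contradiction: if $\sum_k \min(a_k, b_k) = 0$, then for every $k$ either $a_k = 0$ or $b_k = 0$. Let $K_A = \{k : b_k = 0,\, a_k > 0\}$. For $k \in K_A$ we have $b_{j,k} = 0$ for all $j$, and the hypothesis $b_k - a_k \equiv 0 \pmod q$ forces $a_k$ to be a positive multiple of $q$. Consequently $\sum_{k \in K_A} a_k \equiv 0 \pmod q$. On the other hand, for $k \notin K_A$ we have $a_k = 0$ (by our contradiction assumption, together with the possibility that both vanish), hence in particular $a_{0,k} = 0$, so $\sum_{k \in K_A} a_{0,k} = \sum_{k=1}^n a_{0,k} \equiv 1 \pmod q$ and $\sum_{k \in K_A} a_{i,k} \equiv 0 \pmod q$ for $i \geq 1$ by \eqref{eqn.Lala}. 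Summing over $i$ yields $\sum_{k \in K_A} a_k \equiv 1 \pmod q$, contradicting the previous congruence since $q \geq 2$. This completes the proof of the claim, and thus of the lemma.
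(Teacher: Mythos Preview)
Your proof is correct and, for \eqref{eqn.thetaisbalanced}, essentially identical to the paper's. For \eqref{eqn.enoughVars}, your reduction to the single inequality $\sum_k \min(a_k,b_k)\geq 1$ is a clean reformulation, but the contradiction argument you run afterwards is more work than needed. The paper's route is shorter: from your own identity $q\mu_+ + \sum_k \min(a_k,b_k) = \sum_k b_k$ together with $\sum_k b_k \equiv 1 \pmod q$ (condition \eqref{eqn.Lala}) and $q\mu_+ \equiv 0 \pmod q$ (the hypothesis $b_k-a_k\equiv 0\pmod q$), one reads off $\sum_k \min(a_k,b_k)\equiv 1\pmod q$, and since this quantity is nonnegative it is at least $1$. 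Your partition into $K_A$ recovers the same congruence information, just less directly.
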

Assuming this lemma, we define 
\[
  \widehat{\varphi}_e \left( F^e_* \prod_k \exesi{a}{k} \whysi{b}{k}\right) = \vartheta \cdot  \prod_{k=1}^n \vartheta_k
\]
where $\vartheta_k \in \kay[\bm x_k, \bm y_k] \subset S^{\otimes n}$ is defined inductively as follows. 

For $\vartheta_1$, if $b_1 - a_1 \geq 0$ then $b_1 - a_1 = \mu_{+,1} q$. Let $f_1 = 1$ and let $g_1$ be some factor of $\whystothe{\upsilon\left(\sumofbs{\bullet} \right)}$ of degree $\mu_{+,1}$. This is possible by \autoref{eqn.enoughVars}, as $\sum_{j=0}^s \upsilon \bigl( \sum_{k=1}^n b_{j, k}\bigr) \geq \mu_{+,1}$. For all $k$, let $\varpi_k: S \to S^{\otimes n}$ be the canonical homomorphism that sends $S$ to the $k$-th factor of the tensor product. Then $\vartheta_1 = \varpi_1(g_1)$. 

Similarly, if $b_1 - a_1 < 0$, we know that $a_1 - b_1 = \nu_{+,1} q$. We let $f_1$ be some factor of 
$\exestothe{\upsilon\left(\sumofas{\bullet} \right)}$ of degree $\nu_{+,1}$ and let $g_1 = 1$. This is again possible by \autoref{eqn.enoughVars}. Then we define $\vartheta_1 = \varpi_1(f_1)$. 

Having defined $\vartheta_k, f_k$, and $g_k$ for $i = 1, \ldots, m$. We define $\vartheta_{m+1}$ as follows: if $b_{m+1} - a_{m+1} \geq 0$ then let $f_{m+1} = 1$ and let $g_{m+1}$ be some factor of 
\[
\whystothe{\upsilon\left(\sumofbs{\bullet} \right)} \Big/g_1 \cdots g_m
\] 
of degree $\mu_{+,m+1}$. We know that this is always possible by \autoref{eqn.enoughVars}. Then $\vartheta_{m+1} = \varpi_{m+1}(g_{m+1})$.  Similarly, if $b_{m+1} - a_{m+1} < 0$, we let $f_{m+1}$ be some factor of 
\[
\exestothe{\upsilon\left(\sumofas{\bullet} \right)} \Big/ f_1 \cdots f_m
\]
of degree $\nu_{+,m+1}$ and let $g_{m+1} = 1$. Then $\vartheta_{m+1} = \varpi_{m+1}(f_{m+1})$. 

Having defined $\vartheta_k$ for $k = 1, \ldots, n$, we simply let 
\[
 \vartheta = \varpi_1\left( \exestothe{\upsilon\left(\sumofas{\bullet} \right)} \whystothe{\upsilon\left(\sumofbs{\bullet} \right)} \Big/f_1 \cdots f_n g_1 \cdots g_n \right)
\]
it is clear from the definition of $\psi$ that $\widehat \varphi_e$ satisfies
\[
  \Delta_n \circ \widehat\varphi_e = \psi. 
\]

It remains to check that $\widehat \varphi_e\left( F^e_* R^{\otimes n} \right)\subset R^{\otimes n}$. It is enough to check that $\liftofphi$ sends each of the $R^{\otimes n}$-module generators from \autoref{rem.generators} to $R^{\otimes n}$. Recall any such generator can be written as

\[
 \prod_{k=1}^n \zeesi{\rho}{k} \cdot F^e_* \prod_{k=1}^n \exesi{a}{k} \whysi{b}{k}
\]
where, for each $k$, $\zeesi{\rho}{k} = \exesi{\mu}{k}$ if $b_k - a_k \geq 0$, and $\zeesi{\rho}{k} = \whysi{\nu}{k}$ if $b_k - a_k < 0$. Here, as in \autoref{rem.generators}, $\sum_{i} \mu_{i,k} = (b_k - a_k)/q$ and $\sum_{j} \nu_{j,k} = (a_k - b_k)/q$. Note that $\sum_{i} \mu_{i,k}$ and  $\sum_{j} \nu_{j,k}$ are respectively the quantities $\mu_{+. k}$ and $\nu_{+,k}$ defined in \autoref{claim.mainclaim}. Then 
\[
  \liftofphi\left( \prod_{k=1}^n \zeesi{\rho}{k} \cdot F^e_* \prod_{k=1}^n \exesi{a}{k} \whysi{b}{k} \right) =  \prod_{k=1}^n \zeesi{\rho}{k}  \cdot \liftofphi\left( F^e_* \prod_{k=1}^n \exesi{a}{k} \whysi{b}{k} \right) = 0
\]
if condition \autoref{eqn.Lala} is not satisfied by $ \prod_{k=1}^n \exesi{a}{k} \whysi{b}{k}$. Otherwise,  
\begin{align*}
   \liftofphi\left( \prod_{k=1}^n \zeesi{\rho}{k} \cdot F^e_* \prod_{k=1}^n \exesi{a}{k} \whysi{b}{k} \right) &=  \prod_{k=1}^n \zeesi{\rho}{k}  \cdot \liftofphi\left( F^e_* \prod_{k=1}^n \exesi{a}{k} \whysi{b}{k} \right)\\
   &= \prod_{k=1}^n \zeesi{\rho}{k} \cdot \vartheta \cdot \prod_{k=1}^n \vartheta_k \\
   &= \vartheta \cdot \prod_{k=1}^n \zeesi{\rho}{k} \vartheta_k.
\end{align*}
For each $k$, if $b_k - a_k \geq 0$, then $\zeesi{\rho}{k} = \exesi{\mu}{k}$ and $\vartheta_k$ is a monomial in $\{y_{0, k}, \ldots, y_{s, k} \}$ of degree $\mu_{+,k}$. Similarly, if $b_k - a_k < 0$, then by construciton $\zeesi{\rho}{k} = \whysi{\nu}{k}$ and $\vartheta_k$ is a monomial in $\{x_{0, k}, \ldots, x_{r, k} \}$ of degree $\nu_{+,k}$. In either case, we see that 
\[
\prod_{k=1}^n \zeesi{\rho}{k} \vartheta_k \in R^{\otimes n}.
\]
So it just remains to show that $\vartheta\in R^{\otimes n}$. To see this, it suffices to show that 
\[
  \exestothe{\upsilon\left(\sumofas{\bullet} \right)} \whystothe{\upsilon\left(\sumofbs{\bullet} \right)} \Big/f_1 \cdots f_n g_1 \cdots g_n  \in R.
\]
That is what \autoref{eqn.thetaisbalanced} is all about, for the degrees in terms of $y$'s and $x$'s in this monomial are, respectively,
\[
  \sum_{j=0}^s \upsilon\left( \sum_k b_{j, k} \right) - \sum_k \nu_{+,k}, \quad  \sum_{i=0}^r \upsilon\left( \sum_k a_{i, k} \right) - \sum_k \mu_{+,k}.
\]
In order to prove these two numbers are equal, it suffices to show that
\[
\sum_{j=0}^s \upsilon\left( \sum_k b_{j, k} \right) - \sum_{i=0}^r \upsilon\left( \sum_k a_{i, k} \right) = \sum_k \nu_{+,k} - \sum_k \mu_{+,k}.
\]
However, the right-hand side is nothing but $\sum_k (b_k - a_k)/q$, so this follows from \autoref{eqn.thetaisbalanced}. This shows that $\widehat \varphi_e\left( F^e_* R^{\otimes n} \right)\subset R^{\otimes n}$.

All that remains now is to prove \autoref{claim.mainclaim}.

\begin{proof}[Proof of \autoref{claim.mainclaim}]
To prove \autoref{eqn.thetaisbalanced}, we just switch the order of summation:

\begin{equation*}
  \sum_{k=1}^n b_k - a_k = \sum_{k=1}^n \left( \sum_{j=0}^s b_{j, k}  - \sum_{i=0}^r a_{i, k} \right) = \sum_{j=0}^s \sum_{k=1}^n b_{j, k} - \sum_{i=0}^r \sum_{k=1}^n a_{i,k}.
\end{equation*}
As \autoref{eqn.Lala} holds, we know that, for $i, j \geq 1$, we have $\sum_{k=1}^n b_{j, k} = q \upsilon \left( \sum_{k=1}^n b_{j, k} \right)$ and $\sum_{k=1}^n a_{i, k} = q \upsilon \left( \sum_{k=1}^n a_{i,k} \right)$. On the other hand, for $i = j = 0$, we rather have $\sum_{k=1}^n b_{j, k} = q \upsilon \left( \sum_{k=1}^n b_{j, k} \right) + 1$ and $\sum_{k=1}^n a_{i, k} = q \upsilon \left( \sum_{k=1}^n a_{i, k} \right) + 1$. In particular, for all $i$ and $j$ we have
\[
\sum_{k=1}^n b_{j, k} - \sum_{k=1}^n a_{i, k} = q \left( \upsilon \left( \sum_{k=1}^n b_{j, k} \right) - \upsilon \left( \sum_{k=1}^n a_{i, k} \right) \right)
\]
which finishes the proof of equation \autoref{eqn.thetaisbalanced}.

To prove \autoref{eqn.enoughVars}, it is enough to show
\[
   q\sum_{j=0}^s \upsilon\left( \sum_{k=1}^n b_{j, k} \right) \geq q\sum_{k=1}^n \mu_{+,k}
\]
(by symmetry, we will not have to check the other inequality). To see this, note that, by condition \autoref{eqn.Lala} we have
\[
q\sum_{j=0}^s \upsilon\left( \sum_{k=1}^n b_{j, k} \right) = \sum_{k=1}^n \sum_{j=0}^s b_{j, k} - 1.
\]
Further, we have
\begin{equation*}
q\sum_{k=1}^n \mu_{+,k} \leq \sum_{k=1}^n |b_k -a_k| \leq \sum_{k=1}^n b_k =\sum_{k=1}^n \sum_{j=0}^s b_{j, k}.
\end{equation*}
However, by condition \autoref{eqn.Lala}, we see that
\[
\sum_{k=1}^n \sum_{j=0}^s b_{j, k} \equiv 1 \mod q
\]
so we have
\[
q\sum_{k=1}^n \mu_{+,k} \neq \sum_{k=1}^n \sum_{j=0}^s b_{j, k}
\]
Thus, 
\[
q\sum_{k=1}^n \mu_{+,k} \leq \sum_{k=1}^n \sum_{j=0}^s b_{j, k} - 1 = q\sum_{j=0}^s \upsilon\left( \sum_{k=1}^n b_{j, k} \right).
\]
\end{proof}
This proves \autoref{claim.main} and therefore \autoref{thm.SegreDFR}.

\section{USTP for KLT complex singularities of diagonal $F$-regular type}
Let $R$ be a ring of equicharacteristic $0$. A \emph{descent datum} is a finitely generated $\bZ$-algebra $A\subset K$. A \emph{model} of $R$ for this descent datum is  an $A$-algebra $R_A \subset R$, such that $R_A$ is a free $A$-module and $R_A \otimes_A K = R$; see for instance \cite{HochsterHunekeTightClosureInEqualCharactersticZero} or \cite[Remark 5.3]{SmolkinSubadditivity}. Note that $A/\mu$ is a finite field, and in particular a perfect field of positive characteristic, for all maximal ideals $\mu \subset A$. We say that  $R$ is of \emph{diagonally $F$-regular type} if, for all choices of descent data $A\subset K$, the set
\[
  \{\mu \in \operatorname{MaxSpec} A \mid R_A \otimes_A A/\mu \text{ is diagonally $F$-regular}\}
\]
contains a dense open subset of  $\operatorname{MaxSpec} A$. In this case, we say that $R_A \otimes_A A/\mu$ is diagonally $F$-regular for $\mu$ ``sufficiently general.'' We notice that rings of diagonally $F$-regular type satisfy USTP via a standard reduction-mod $p$ argument. 

\begin{theorem}
\label{lemma.diagFregType}
Let $K$ be a field of characteristic 0 and let $R$ be a $K$-algebra essentially of finite type and of diagonally $F$-regular type. Let $d = \dim R$. Then we have $\mathfrak p^{(nd)} \subset \mathfrak p^n$ for all $n$ and all prime ideals $\mathfrak p \subset R$. 
\end{theorem}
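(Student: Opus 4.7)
The plan is to run the standard reduction-mod-$p$ argument, descending $R$ to characteristic $p$ and appealing to \autoref{thm.MainTheorem}. First I would fix a prime $\mathfrak p \subset R$ and $n \in \bN$; we may assume $d \geq 1$, else $R$ is Artinian and the conclusion is vacuous. Then I would choose a descent datum $A \subset K$ together with a model $R_A$ of $R$ such that $\mathfrak p_A \coloneqq \mathfrak p \cap R_A$ descends $\mathfrak p$. After inverting a single nonzero element of $A$, generic flatness and generic freeness allow me to arrange:
\begin{itemize}
\item $R_A$ is an $A$-flat $A$-algebra essentially of finite type with $R = R_A \otimes_A K$;
\item for every closed point $\mu \in \operatorname{MaxSpec} A$, the reduction $R_A \otimes_A A/\mu$ has dimension $d$, $\mathfrak p_A \otimes_A A/\mu$ is a prime of $R_A \otimes_A A/\mu$ of height $\height(\mathfrak p)$, and reduction modulo $\mu$ commutes with both the ordinary powers $\mathfrak p_A^n$ and the symbolic powers $\mathfrak p_A^{(nd)}$;
\item $B \coloneqq R_A/\mathfrak p_A^n$ is a free $A$-module.
\end{itemize}
By the hypothesis of diagonally $F$-regular type, after a possible further shrinking, there is a dense open subset $U \subset \operatorname{MaxSpec} A$ such that $R_A \otimes_A A/\mu$ is diagonally $F$-regular for every $\mu \in U$.

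Next, for each $\mu \in U$, I would apply \autoref{thm.MainTheorem} to the diagonally $F$-regular $A/\mu$-algebra $R_A \otimes_A A/\mu$ with the prime $\mathfrak p_A \otimes_A A/\mu$, whose height is at most $d$; since symbolic powers are monotone decreasing in the exponent, this yields $(\mathfrak p_A \otimes_A A/\mu)^{(nd)} \subset (\mathfrak p_A \otimes_A A/\mu)^n$. Using the commutation of reduction with powers of $\mathfrak p_A$, this rewrites as $\mathfrak p_A^{(nd)} \otimes_A A/\mu \subset \mathfrak p_A^n \otimes_A A/\mu$ inside $R_A \otimes_A A/\mu$, for every $\mu \in U$.

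To conclude, fix $f \in \mathfrak p_A^{(nd)}$ and let $\bar f \in B$ denote its image. The previous step shows that $\bar f \otimes 1 = 0$ in $B \otimes_A A/\mu = B/\mu B$ for every $\mu \in U$; after fixing an $A$-basis of the free $A$-module $B$, every coordinate of $\bar f$ lies in $\bigcap_{\mu \in U} \mu$. Since $A$ is a Jacobson domain and $U$ is dense in $\operatorname{MaxSpec} A$, this intersection vanishes, so $\bar f = 0$ and $f \in \mathfrak p_A^n$. Hence $\mathfrak p_A^{(nd)} \subset \mathfrak p_A^n$ in $R_A$, and tensoring with $K$ yields $\mathfrak p^{(nd)} \subset \mathfrak p^n$ in $R$. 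The hard part will be justifying the second bullet above, namely that symbolic powers commute with reduction modulo $\mu$ after a single generic localization of $A$; this is a delicate but standard feature of the reduction-mod-$p$ setup in \cite{HochsterHunekeTightClosureInEqualCharactersticZero}.
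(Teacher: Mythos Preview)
Your proposal is correct and follows essentially the same reduction-mod-$p$ strategy as the paper: descend $R$, $\mathfrak p$, and the relevant powers to a model over a finitely generated $\bZ$-algebra $A$, invoke \cite{HochsterHunekeTightClosureInEqualCharactersticZero} to make primality and symbolic powers commute with reduction, and then apply \autoref{thm.MainTheorem} fiberwise. The only difference is expository: the paper simply asserts that it suffices to check the containment modulo sufficiently general $\mu$, whereas you spell this out via freeness of $R_A/\mathfrak p_A^n$ and the Jacobson property of $A$, and you are explicit about the height bound $\height(\mathfrak p_A \otimes_A A/\mu) \leq d$ and the monotonicity of symbolic powers where the paper tacitly uses the remark following \autoref{thm.MainTheorem}.
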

\begin{proof}
  For any descent datum $A$, let $\mathfrak p_A = \mathfrak p \cap R_A$, $\mathfrak p_A^n = \mathfrak p^n \cap R_A$, and  $\mathfrak p_A^{(dn)} = \mathfrak p^{(dn)} \cap R_A$. It suffices to show that $\mathfrak p_A^{(dn)} \otimes_A A/\mu \subset \mathfrak p^n_A \otimes_A A/\mu$ for $\mu$ sufficiently general. We can choose a descent datum $A \subset K$ and a model $R_A \subset R$ such that:
  \begin{enumerate}
  \item $R_A \otimes_A A/\mu$ is diagonally $F$-regular, 
  \item $\mathfrak p_A \otimes_A A/\mu$ is a prime ideal, and
  \item $\mathfrak p^{(dn)}_A \otimes_A A/\mu = (\mathfrak p_A \otimes_A A/\mu)^{(dn)}$,
  \end{enumerate}
  for $\mu$ sufficiently general. For part (c), we use the facts that $\mathfrak p^{(dn)}$ is the $\mathfrak p$-primary component of $\mathfrak p^{dn}$, that taking powers of ideals commutes with descent, and that we can choose $A$ so that descent commutes with taking the primary decomposition of a given ideal. See \cite[\S 2.1]{HochsterHunekeTightClosureInEqualCharactersticZero} for details. It follows that
  \[
   \mathfrak p_A^{(dn)}\otimes_A A/\mu = (\mathfrak p \otimes_A A/\mu)^{(dn)} \subset (\mathfrak p \otimes_A A/\mu)^{n} = \mathfrak p^n_A \otimes_A A/\mu,
  \]
  as desired. 
\end{proof}

\begin{example}
The affine cone over $\bP^r_{\bC} \times_{\bC} \bP^{s}_{\bC}$ is a KLT singularity of diagonal $F$-regular type. In particular, USTP holds with uniform Swanson's exponent equal to $r+s=(r+s+1)-1$.
\end{example}

We conclude this paper by asking how varieties of diagonally $F$-regular type fit into the theory of singularities studied in birational geometry. 
\begin{question}
 Is there a characterization of complex varieties of diagonally $F$-regular type in terms of log-discrepancies?
\end{question}

\bibliographystyle{skalpha}
\bibliography{MainBib}

\end{document}